\date{}
\newcommand{\mynioptions}{[color=blue!20,inline]}
\newcommand{\mynoptions}{[color=blue!20]}
\newcommand{\myni}{\expandafter\todo\mynioptions}
\newcommand{\myn}{\expandafter\todo\mynoptions}
\newcommand{\bs}[1]{\ensuremath{\boldsymbol{#1}}}
\newcommand{\pci}[1]{\ensuremath{\bs{p}_{\boldsymbol{c}_{i}}}\xspace}
\definecolor{ao}{rgb}{0.0, 0.5, 0.0}
\newtheorem{theorem}{Theorem}
\newtheorem{remark}{Remark}
\newtheorem{lemma}{Lemma}
\newtheorem{problem}{Problem}
\newtheorem{question}{Question}
\newcommand{\R}{\mathbb{R}}
\newcommand{\N}{\mathbb{N}}
\newcommand{\U}{\mathcal{U}}
\newcommand{\W}{\mathcal{W}}
\newcommand{\X}{\mathcal{X}}
\newcommand{\mcS}{\mathcal{S}}
\newcommand{\mcT}{\mathcal{T}}
\newcommand{\mcR}{\mathcal{R}}
\newcommand{\mcP}{\mathcal{P}}
\newcommand{\mcV}{\mathcal{V}}
\newcommand{\mcC}{\mathcal{C}}
\newcommand{\Prob}{\mathbb{P}}
\newcommand{\Exp}{\mathbb{E}}
\newcommand{\Costpi}{r_{x_0}^{U}(\mcS, \mcT)}
\title{Voronoi Partition-based Scenario Reduction for Fast Sampling-based Stochastic Reachability Computation of LTI Systems}
\author{Hossein Sartipizadeh, Abraham P. Vinod,  Beh{\c{c}}et A{\c{c}}{\i}kme{\c{s}}e, and Meeko Oishi
	\thanks{This material is based upon work supported by the National Science Foundation, the Air Force Office of Scientific Research, and the Office of Naval Research.  Hossein Sartipizadeh and Beh{\c{c}}et A{\c{c}}{\i}kme{\c{s}}e were supported by Air Force Research Laboratory grant FA8650-15-C-2546 and the Office of Naval Research (ONR) Grant No. N00014-15-IP-00052. 
    Vinod and Oishi were supported under NSF Grant Number CMMI-1254990, NSF Grant No. IIS-1528047, and AFRL Grant No. FA9453-17-C-0087. Any opinions, findings, and conclusions or recommendations expressed in this material are those of the authors and do not necessarily reflect the views of the National Science Foundation. \newline
    H. Sartipizadeh (corresponding author) is with University of Texas at Austin, TX, US. Email: {\tt hsartipi@utexas.edu}.\newline 
		A. Vinod and M. Oishi are with the Electrical \& Computer Engineering, University of New Mexico, Albuquerque, NM, US. Email: {\tt aby.vinod@gmail.com; oishi@unm.edu}.\newline
		B. A{\c{c}}{\i}kme{\c{s}}e is with the Department of Aeronautics \& Astronautics in the University of Washington, Seattle, WA. Email: {\tt behcet@uw.edu.}\newline
	}
}
\begin{document}
	
	\maketitle
	\thispagestyle{empty}
	\pagestyle{empty}

	\begin{abstract}

In this paper, we address the stochastic reach-avoid problem for linear systems with additive stochastic uncertainty.
We seek to compute the maximum probability that the states remain in a safe set over a finite time horizon and reach a target set at the final time.
We employ sampling-based methods and provide a lower bound on the number of scenarios required to guarantee that our estimate provides an underapproximation.
Due to the probabilistic nature of the sampling-based methods, our underapproximation guarantee is probabilistic, and the proposed lower bound can be used to satisfy a prescribed probabilistic confidence level. 
To decrease the computational complexity, we propose a Voronoi partition-based to check the  reach-avoid constraints at representative partitions (cells), instead of the original scenarios. 
The state constraints arising from the safe and target sets are tightened appropriately so that the solution  provides an underapproximation for the original sampling-based method. 
We propose a systematic approach for selecting these representative cells and provide the flexibility to trade-off the number of cells needed for accuracy with the computational cost.      
	\end{abstract}
	
	
	\section{Introduction} \label{sec:Intro}
	
    Reach-avoid analysis is an established verification tool for discrete-time stochastic dynamical systems, which provides probabilistic guarantees on the safety and performance~\cite{SummersAutomatica2010,HomChaudhuriACC2017,MaloneHSCC2014,lesser2013stochastic,GleasonCDC2017}. This paper focuses on the finite time horizon \emph{terminal} hitting
    time stochastic reach-avoid problem \cite{SummersAutomatica2010} (referred to here as the {\em terminal
    time problem}), that is, computation of the maximum probability
    of hitting a target set at the terminal time, while avoiding an unsafe set
    during all the preceding time steps using a controller that satisfies the
    specified control bounds.  
    
    The solution to the terminal time problem relies on dynamic programming~\cite{SummersAutomatica2010,AbateAutomatica2008,AbateHSCC2007}, hence a variety of approximation methods have been suggested in literature. Researchers have looked for scalable approaches to solve this problem using approximate dynamic programming \cite{KariotoglouSCL2016, ManganiniCYB2015}, Gaussian mixtures \cite{KariotoglouSCL2016}, particle filters \cite{ManganiniCYB2015, lesser2013stochastic}, convex chance-constrained optimization \cite{lesser2013stochastic}, Fourier transform-based verification~\cite{VinodLCSS2017, VinodHSCC2018}, Lagrangian approaches~\cite{GleasonCDC2017}, and semi-definite programming~\cite{SDP_kariotgolou}.  Currently, the largest system verified is a $40$-dimensional chain of double integrators~\cite{VinodLCSS2017, VinodHSCC2018} using Fourier transform-based techniques.  Existing methods impose a high computational complexity which makes them unrealistic for real-time applications.  

    In this paper, we reconsider the sampling-based approach, proposed in~\cite{lesser2013stochastic}. 
    Similar sampling-based approach has been used successfully in robotics~\cite{blackmore2010probabilistic} and in stochastic optimal control~\cite{satipizadeh2018Automatica,sartipizadeh2018CCTOOL,calafiore2013stochastic,calafiore2006scenario}. In the sampling-based stochastic reach-avoid problem, we sample the stochastic disturbance to produce a finite set of \emph{scenarios}, and then formulate a mixed-integer linear program (MILP) to maximize the number of scenarios that satisfy the reach-avoid constraints~\cite{blackmore2010probabilistic,lesser2013stochastic}. 
As expected, the approximated probability will converge to the true terminal time probability as the number of scenarios increases. However, the computational complexity of MILP increases exponentially with the number of binary decision variables (the number of scenarios)~\cite[Rem. 1]{bemporad_control_1999} making the MILP formulation practically intractable.  
 
 The main contributions of this paper are two-fold. We first provide a lower
    bound on the number of scenarios needed to probabilistically guarantee a user-specified
    upper bound on the approximation error with a user-specified confidence level using concentration techniques.
    Using Hoeffding's inequality, we demonstrate that the number
    of scenarios that need to be considered is inversely proportional
    to the square of the desired upper bound on the estimate error.
    Next, we propose a Voronoi-based undersampling technique that
    underapproximates the MILP-based solution in a computationally efficient manner. This approach allows us to partially mitigate the exponential computational complexity, and provides flexibility to select the number of partitions based on the allowable online computational complexity. We demonstrate the application of the proposed method in a problem of spacecraft rendezvous and docking.

    The organization of the paper is as follows:
Problem formulation and preliminary definitions are stated in Section~\ref{sec:ProblemDescription}. Lower bound on the required number of scenarios for the prescribed confidence level is given in Section~\ref{sec:BoundOnSamples}. Section~\ref{sec:Methodology} presents the proposed partition-based method and the approximate solution reconstruction. The performance of the proposed method is investigated on a spacecraft rendezvous maneuvering and docking in Section~\ref{sec:IllustrativeExample}.

	
	\section{Problem formulation} 
	\label{sec:ProblemDescription}
We presume $\R$ and $\N$ are sets of real and natural numbers, with $\R^{n}$ a length $n$ vector of real numbers,
  and $\N_{[a,b]}$ the set of natural numbers between $a$ and $b$. For $x\in\R^{n}$, $x^{\top}$ denotes the transpose of $x$. {\color{black} Vector with all elements 1 is denoted $\mathbf{1}$.}

	\subsection{System description}
	
	Consider a discrete-time stochastic LTI system,
	\begin{equation} \label{eq:sys}
	x_{t+1}=Ax_t+Bu_t+w_t
	\end{equation}
    with state  $x_t\in \X=\R^{n_x}$,
    input $u_{t}\in \U\subseteq\R^{n_u}$, disturbance $w_t\in \W\subseteq\R^{n_x}$ at time instant $t$, and matrices $A,B$ assumed to be of appropriate dimensions.
	We assume that $w_k$ is an  independent and identically distributed (i.i.d) random variable with a PDF $\eta_{w}$.
	Note that we require $\eta_{w}$ only to be a probability density function from which we can draw samples, and do not require it to be Gaussian.
	The system~\eqref{eq:sys} over a time horizon with length $N$ can be alternatively written in a ``stacked'' form,
	\begin{equation} \label{eq:sys_stacked}
	X(x_{0},U,W)=G_{x}x_0+G_{u}U+G_{w}W,
	\end{equation}
	with $X=[x_{1}^{\top},\cdots,x_{N}^{\top}]^\top \in \X^N$,  $U=[u_{0}^{\top},\cdots,u_{N-1}^{\top}]^\top\in \U^N$, and $W=[w_{0}^{\top},\cdots,w_{N-1}^{\top}]^\top\in \W^N$ the concatenated state, input, and disturbance vectors over a $N$-length horizon \cite{sartipizadeh2018CCTOOL}. 
	The matrices $G_{x}$, $G_{u}$, and $G_{w}$ may be obtained from the system matrices in \eqref{eq:sys} (see~\cite{sartipizadeh2018CCTOOL}). 
	Due to the stochastic nature of $w_k$, the state $x_k$ and the concatenated state vector $X$ are random.
	We define $\Prob_{X}^{x_0,U}$ as the probability measure associated with the random vector $X$, which is induced from the probability measure of the concatenated disturbance vector $\Prob_W$ and \eqref{eq:sys_stacked}.
	By the i.i.d. assumption on $w_k$, $\Prob_W$ is characterized by $(\eta_{w})^N$.
	
	
	\subsection{Stochastic reach-avoid problem}

	We are interested in the terminal time problem~\cite{SummersAutomatica2010}.
 As in \cite{SummersAutomatica2010}, we seek open-loop control laws, to assure tractability (at the cost of conservativeness~\cite{VinodLCSS2017,lesser2013stochastic,VinodHSCC2018}).
	We define the \emph{terminal time probability}, $\Costpi$, for a given initial state $x_0\in \X$ and an open-loop control $U\in \U^N$, as the probability that the state trajectory remains inside the safety set $\mcS \subseteq \X$ and reaches the target set $\mcT \subseteq \X$ at time $N$,
	\begin{align} 
	\Costpi &= \Prob_{X}^{x_0,U}\left\{x_{N}\in \mcT \wedge x_t\in \mcS,\forall t\in \mathbb{N}_{[0,N-1]}\right\}= \Prob_{X}^{x_0,U}\left\{ X\in \mcR\right\} 1_{\mcS}( x_0).\label{eq:r_prob} 
	\end{align}
	with $ \mcR = \mcS^{N-1}\times \mcT $.
	The stochastic reach-avoid problem is formulated as: 
	\begin{problem} \label{prb:Original}  Open-loop terminal time problem:
		\begin{maxi} |s|
			{U\in \U^{N}}{\Costpi}{\label{prob:original}} {p^{\ast}( x_{0})=}
		\end{maxi}
		Problem~\ref{prb:Original} is equivalent to (see~\cite[Sec. 4]{SummersAutomatica2010}),
		\begin{maxi} |s|
			{U\in \U^{N}}{1_\mcS( x_0)\Exp_z^{x_0,U}\left[  z \right],}{\label{prob:original_exp}}{p^{\ast}( x_{0})=}
		\end{maxi}
\hspace{-1.5mm}where $z=1_\mcT(x_N)\prod_{t=1}^{N-1}1_\mcS(x_t) = 1_{\mcR}(X)$ is a Bernoulli random variable with a discrete probability measure $\Prob_z^{x_0,U}$ induced from $\Prob_X^{x_0,U}$.
	\end{problem}
	\begin{remark}\label{rem:zeroProb}
		In Problem~\ref{prb:Original}, $p^\ast(x_0)$ is trivially zero when $x_0\not\in\mcS$, irrespective of the choice of the controller.
	\end{remark}

	In~\cite{lesser2013stochastic,blackmore2010probabilistic}, a mixed-integer linear program (MILP) was formulated as an approximation of Problem~\ref{prb:Original} when the safe and the target sets are \emph{polytopic}.
    Note that restriction of the safe and the target sets to polytopes is not severe since convex and compact sets admit tight polytopic underapproximations~\cite[Ex. 2.25]{BoydConvex2004}.
	We will denote the safe set $\mcS$, the target set $\mcT$, and the reach-avoid constraint set $ \mcR$ as
    \begin{subequations}
	\begin{align}
	\mcS&=\{x|f_{\mcS}x\leq h_{\mcS}\}, \\
	\mcT&=\{x|f_{\mcT}x\leq h_{\mcT}\}, \\
    \mcR&=\{x|FX\leq h\}\label{eq:mcR_defn}
	\end{align}%
    \end{subequations} %
	with $l_\mcS,l_\mcT\in \N, L=(N-1)l_\mcS+l_\mcT, f_\mcS\in \mathbb{R}^{l_\mcS\times n_x}$, $f_\mcT\in \mathbb{R}^{l_\mcT\times n_x}$, and $F\in \mathbb{R}^{L\times n_x}$ is constructed using $f_{\mcS}$ and $f_{\mcT}$.
	Using the ``big-M" approach~\cite{bemporad_control_1999,blackmore2010probabilistic,lesser2013stochastic}, and a sampling-based empirical mean for $\Costpi$ that replaces $\W^{N}$ by a finite set of $K$ random samples, 
	\begin{equation}
	\W_{K}=\{W^{(1)},\cdots,W^{(K)}\},
	\end{equation}
	 we obtain a MILP approximation to Problem~\ref{prb:Original}.
	\begin{problem} \label{prb:MILP_SamplingBased} A MILP approximation to Problem~\ref{prb:Original} is given by
		\begin{maxi*}|s|
			{\substack{U\in\U^{N}}}{\frac{1}{K}\sum_{i=1}^{K}z^{(i)}}{}{}
			\addConstraint{X^{(i)}}{= G_{x}x_{0} + G_{u} U + G_{w}W^{(i)},}{\ i\in \N_{[1,K]}}
			\addConstraint{FX^{(i)}}{\leq h + M(1-z^{(i)})\mathbf{1},}{\ i\in \N_{[1,K]}}
			\addConstraint{z^{(i)}}{\in\{0,1\},}{\ i\in \N_{[1,K]}}
\end{maxi*}
\hspace{-1.5mm}with the optimal value denoted by $p_{K}^{\ast}(x_0)$, optimal control input $U^{\ast}_{K}$, $M \in \mathbb{R}$  some large positive number, and $W^{(i)}$ that are concatenated disturbance realizations sampled from $ \W^N$, based on the probability law $\Prob_W$.
	\end{problem}
	As observed in~\cite{lesser2013stochastic,bemporad_control_1999,blackmore2010probabilistic}, $z^{(i)}$ takes the value $1$ if and only if $FX^{(i)}\leq h$ for all $i\in \mathbb{N}_{[1,K]}$.
	For any sampled trajectory that violates the reach-avoid constraint $X^{(j)}\in \mcR,\ j\in \mathbb{N}_{[1,K]}$, we have $FX^{(j)}> h$ which is encoded by $z^{(j)}=0$ . This concept is illustrated in Figure~\ref{fig:voronoi_0}; red crosses indicate the sampled trajectories which fail to remain in $\mcR$ and, therefore, their corresponding binary variables are zero.
	From~\cite{lesser2013stochastic}, we have,
	\begin{align}
	\lim_{{K \rightarrow \infty}}p_{K}^{\ast}(x_0) = p^{\ast}(x_0).
	\end{align}
	However, Problem~\ref{prb:MILP_SamplingBased} becomes computationally intractable for large values of $K$ since the worst-case time complexity of MILP problems exponentially increases in the number of binary decision variables~\cite[Rem. 1]{bemporad_control_1999}.
	
	\begin{figure}
		\centering
		\includegraphics[width=2.6in]{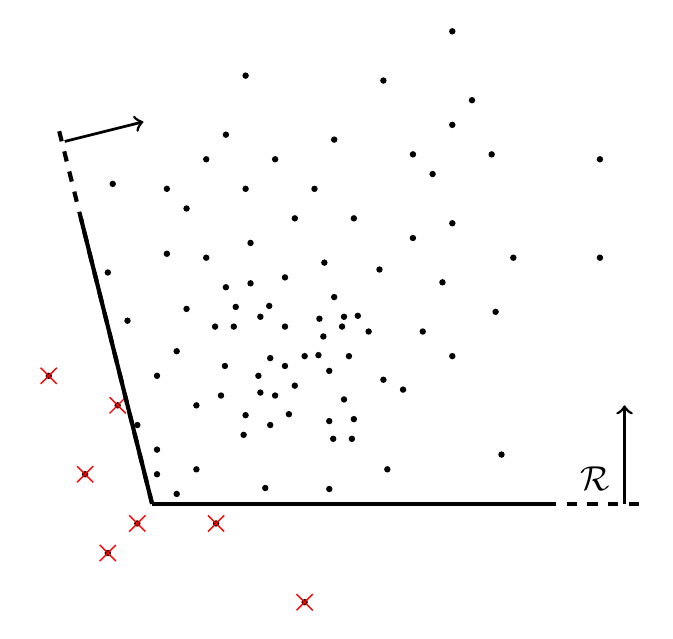}
		\vspace{-2mm}
        \caption{Sampling-based approach illustration for reach-avoid problem in $2$D. Reach-avoid set $\mcR$ is distinguished with lines. Black dots and red crosses represent the sampled state trajectories corresponding to sampled disturbance $\W_{K}$ for a given $U$ and $x_0$ that succeed and fail to remain in $\mcR$, respectively. Empirical mean of remaining in reach-avoid set is obtained by dividing the number of samples inside $\mcR$ to the total number of samples. }
		\label{fig:voronoi_0}
	\end{figure}
	
	\subsection{Problem statements}

	Based on Problem~\ref{prb:MILP_SamplingBased}, 
we define the random vector $Z={[z^{(1)}\ \ldots\ z^{(K)}]}^\top$, the concatenation of an i.i.d process consisting of Bernoulli random variables  $\{z^{(i)}\}_{i=1}^K$.
	By definition, the probability measure associated with $Z$ is $\Prob_{Z}^{x_0, U}=\prod_{i=1}^K\Prob_z^{x_0,U}$.
	
	We will address the following questions:
	\begin{question}\label{ques:Bound}
{\color{black}Given a violation parameter $\delta\in[0,1]$ and a risk of failure $\beta\in[0,1]$,  characterize the sufficient number of scenarios $K$ to guarantee $\Prob_{Z}^{x_0, U^\ast_K}\{ p_{K}^\ast(x_0) - p^{\ast}(x_0) \geq \delta\}\leq \beta$ or equivalently $\Prob_{Z}^{x_0, U^\ast_K}\{ p^{\ast}(x_0) \geq p_{K}^\ast(x_0)-\delta \}\geq 1-\beta$, for all $x_0\in \mcS$.}

	\end{question}
	\begin{question}\label{ques:Voronoi}
		{\color{black}Given $K$ scenarios as characterized in Question~\ref{ques:Bound} to meet desired specifications, construct an under-approximate MILP with $\hat{K}<K$ scenarios (hence binary decision variables) that results in the terminal time probability estimate $\hat{p}(x_0)$ with $\hat{p}(x_0)\leq p_{K}^{\ast}(x_0)$. }
	\end{question}

    We will address Question~\ref{ques:Bound} using Hoeffding's inequality. By solving Question~1, we seek sufficient number of  scenarios that leads to a desired upper bound on the likelihood that the approximate solution exceeds the true solution by some threshold ($\delta$). 
{\color{black}Note that a smaller $\delta$ implies less conservatism as well as higher accuracy in estimation, but requires more scenarios for a fixed $\beta$, as claimed in next section. Then we address Question~\ref{ques:Voronoi} using Voronoi partitions to reduce the number of scenarios while preserving the original specifications $\delta$ and $\beta$.}
\subsection{Voronoi partition and data clustering}
	\label{sec:VoronoiClustering}
{\color{black}Here we introduce some preliminaries on Voronoi partition that we will use in the rest of this paper. }
Given a set of seeds (centres) $\mcC=\left\{c^{(1)},\cdots,c^{(\hat{K})}\right\}$, $c^{(i)}\in\R^{d}$,
    a Voronoi partition $\mcV(\mcC)$ partitions the $\R^d$ space to $\hat{K}$
    cells $V^{(1)},\cdots,V^{(\hat{K})}$ such that any point in $V^{(j)}$,
    $\forall j\in\N_{[1,\hat{K}]}$, is closer to $c^{(j)}$ than the other
    seeds. Given a set of points $\mcP=\left\{p^{(1)},\cdots,p^{(K)}\right\}$ in  $\R^d$, we use $\mcV_{\mcP}(\mcC)$ to show the partition of $\mcP$ through a Voronoi partition with seeds $\mcC$. We define each cell (may also be referred to as partition in this paper) of $\mcV_{\mcP}(\mcC)$ as
	\begin{align}
        V^{(j)}_{\mcP}(\mcC)&=\{p\in \mcP|d(p,c^{(j)})\leq
            d(p,c^{(\ell)}), \forall j,\ell\in\N_{[1,\hat{K}]} ~~\text{and}~~ j\neq\ell   \},\label{eq:V_j}
	\end{align}
	where $d(p^{(1)},p^{(2)})$ is the distance of $p^{(1)}\in \R^d$ from $p^{(2)}\in \R^d$ in any valid metric (Euclidean norm is used in this paper). 
    We denote the number of elements in $V^{(j)}$ by $ \vert V^{(j)} \vert$.  

    A given set $\mcP\in \mathcal{X}^K$ with $K$ points in $\R^d$ can be clustered in $\hat{K}$ clusters by finding a set of seeds $\mcC\in \mathcal{X}^{\hat{K}}$ that minimizes the \emph{within-cluster sum of squares}, as proposed by $k$-means method/Lloyd's algorithm~\cite{hartigan1975clustering}. The within-cluster sum of squares, denoted by $\mathrm{WSS}(\hat{K},\mcV_{\mcP}(\mcC))$, simply represents the total sum of squared deviations of points in cells from their seeds, i.e., given a set of points $\mcP$, a set of seeds $\mcC$, and a partition set $\mcV_{\mcP}(\mcC)=\{V^{(1)},\cdots,V^{(\hat{K})}\}$,
\begin{equation} \label{eq:wss}
    \mathrm{WSS}(\hat{K},\mcV_{\mcP}(\mcC))=\sum_{j=1}^{\hat{K}}\sum_{i=1}^{K}  1_{V^{(j)}}(p^{(i)})\|p^{(i)}-c^{(j)}\|^2.
\end{equation}
where $1_{V^{(j)}}(p^{(i)})$ is an indicator function which is one if $p^{(i)}$ belongs to cell $V^{(j)}$, and zero otherwise. Let 
\begin{equation} \label{eq:Cstar}
    \mcC^\ast=\underset{\mcC\in \mathcal{X}^{\hat{K}}}{\operatorname{arg\ min}}\ \mathrm{WSS}(\hat{K},\mcV_{\mcP}(\mcC)),
\end{equation}
be the set of optimal seeds. Given $\mcC^\ast$, cells of $\mcV_{\mcP}(\mcC^\ast)$ represent the optimal $\hat{K}$ clusters of $\mcP$. Although solving \eqref{eq:Cstar} is an NP-hard problem \cite{Aloise2009}, efficient algorithms exist to compute a local minima. Starting from an initial guess of $\mcC$, a successive algorithm with time complexity $\mathcal{O}(ndK\hat{K})$ for $n$ iterations can be used to update each seed by replacing it with the centroid of its cluster elements \cite{hartigan1979algorithm}. This process continues until convergence or $n$ reaches its maximum value. Note that the set of optimal seeds $\mcC^\ast$ is not necessarily a subset of $\mcP$. In addition, $\mathrm{WSS}(\hat{K},\mcV_{\mcP}(\mcC))$ is non-increasing in $\hat{K}$, and the number of required clusters can be decided by making a trade-off between $\mathrm{WSS}(\hat{K},\mcV_{\mcP}(\mcC))$ (representing accuracy) and $\hat{K}$ (representing computational complexity).       
	
	\begin{lemma} \label{lem:translation}
		Seed selection and Voronoi configuration are preserved under  translation. 
		\begin{enumerate}
			\item Let $\mcC^\ast=\{c^{(1)},\cdots,c^{(\hat{K})}\}$ be the set of $\hat{K}$ optimal seeds of $\mcP=\{p^{(1)},\cdots,p^{(K)}\}$, calculated as in \eqref{eq:Cstar}. For any fixed vector $a$, $\mcC^{\ast}_{a}=\{a+c^{(1)},\cdots,a+c^{(\hat{K})}\}$ represents the optimal seeds of $\mcP_{a}=\{a+p^{(1)},\cdots,a+p^{(K)}\}$.
			\item For any $p^{(i)}\in\mcP$, let $p^{(i)}\in V^{(j)}_{\mcP}(\mcC^{\ast})$. Then $a+p^{(i)}\in V^{(j)}_{\mcP_a}(\mcC_a^{\ast})$.
		\end{enumerate}
	\end{lemma}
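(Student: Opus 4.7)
The plan is to exploit the fact that translation by a fixed vector $a$ is an isometry of $\R^d$: for any $p,c\in\R^d$, we have $\|(a+p)-(a+c)\|=\|p-c\|$. This single identity is what drives both parts, since the Voronoi cell definition \eqref{eq:V_j} and the WSS objective \eqref{eq:wss} depend on the point--seed configuration only through pairwise distances.

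I would prove Part~2 first, as it is the elementary piece that feeds Part~1. Fix $p^{(i)}\in V^{(j)}_{\mcP}(\mcC^\ast)$. By \eqref{eq:V_j}, $\|p^{(i)}-c^{(j)}\|\leq\|p^{(i)}-c^{(\ell)}\|$ for every $\ell\in\N_{[1,\hat{K}]}$ with $\ell\neq j$. Applying the isometry identity to each side yields $\|(a+p^{(i)})-(a+c^{(j)})\|\leq\|(a+p^{(i)})-(a+c^{(\ell)})\|$, which is exactly the cell-membership condition for $a+p^{(i)}$ with respect to the translated seeds $\mcC^\ast_a$, giving $a+p^{(i)}\in V^{(j)}_{\mcP_a}(\mcC^\ast_a)$.

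For Part~1, I would first show that the WSS functional is translation-equivariant: for any seed set $\mcC=\{c^{(1)},\ldots,c^{(\hat{K})}\}$ and the corresponding translate $\mcC_a=\{a+c^{(1)},\ldots,a+c^{(\hat{K})}\}$,
\begin{equation*}
\mathrm{WSS}(\hat{K},\mcV_{\mcP}(\mcC))=\mathrm{WSS}(\hat{K},\mcV_{\mcP_a}(\mcC_a)).
\end{equation*}
This follows termwise from \eqref{eq:wss}: by the argument used for Part~2, the indicator values $1_{V^{(j)}}(p^{(i)})$ and $1_{V^{(j)}}(a+p^{(i)})$ agree across $\mcV_{\mcP}(\mcC)$ and $\mcV_{\mcP_a}(\mcC_a)$ respectively, and the squared distances $\|p^{(i)}-c^{(j)}\|^2$ and $\|(a+p^{(i)})-(a+c^{(j)})\|^2$ coincide.

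Finally, since the map $\mcC\mapsto\mcC_a$ is a bijection on $\mathcal{X}^{\hat{K}}$, minimizing WSS over seed sets for $\mcP$ is equivalent to minimizing WSS over seed sets for $\mcP_a$, with minimizers related by the translation. Hence $\mcC^\ast_a$ solves \eqref{eq:Cstar} for $\mcP_a$, completing Part~1. The only minor subtlety I anticipate is the treatment of ties in the non-strict inequality of \eqref{eq:V_j}, i.e., points equidistant from two seeds; any consistent tie-breaking rule is itself translation-invariant (again by the isometry identity), so this does not affect the argument.
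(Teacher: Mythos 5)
Your proposal is correct and follows the same idea as the paper, which simply observes that both the Voronoi cell condition \eqref{eq:V_j} and the WSS objective \eqref{eq:wss} depend only on relative distances between points and seeds, all of which are invariant under translation by $a$. You merely spell out the details (the isometry identity, the bijection $\mcC\mapsto\mcC_a$, and tie-breaking) that the paper compresses into a one-line remark, so the two arguments are essentially identical.
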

	\begin{proof}
		The proof is straight forward since both $\mathrm{WSS}$ and Voronoi partitions (as given in \eqref{eq:wss} and \eqref{eq:V_j}) are functions of the relative distance of the points in each cell to their seed. 
	\end{proof}

	\section{Scenarios required to meet given failure tolerance}
	\label{sec:BoundOnSamples}
	Given i.i.d. $y^{(1)}, y^{(2)},\ldots,y^{(K)}$ for $K>0$ and $y^{(i)}\in[0,1],\ \forall i\in \mathbb{N}_{[1,K]}$ with probability measure $\Prob_y$, we define the concatenation of these random variables $Y=[y^{(1)}\ y^{(2)}\ \ldots\ y^{(K)}]^\top\in {[0,1]}^K$.
	The probability measure associated with $Y$ is $\Prob_Y^K=\prod_{i=1}^K \Prob_y$.
	We have the following inequalities from Hoeffding~\cite[Thm.
    1]{hoeffding_probability_1963}.

\begin{lemma}{\textbf{(Hoeffding's inequality)}}\label{lem:Hoeffding}
		Define $ \overline{Y}=\frac{\mathbf{1}^\top Y}{K}=\frac{\sum_{i=1}^K y^{(i)}}{K}$, and $\mu_{\overline{Y}}\triangleq\Exp\left[ \overline{Y}  \right]$.
		For any $\delta>0$,
		\begin{align}   \label{eq:Hoeff_OneSide} 
		\Prob_{Y}^K\left\{ \overline{Y} - \mu_{\overline{Y}} \geq \delta \right\}&\leq e^{-2K\delta^2}.    
		\end{align}
	\end{lemma}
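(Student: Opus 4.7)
The plan is to follow the classical Chernoff bounding argument combined with Hoeffding's lemma for bounded random variables. First I would rewrite the tail event by exponentiating: for any auxiliary parameter $s>0$, the event $\{\overline{Y}-\mu_{\overline{Y}}\geq\delta\}$ coincides with $\{e^{s(\overline{Y}-\mu_{\overline{Y}})}\geq e^{s\delta}\}$, so Markov's inequality yields
\begin{equation*}
\Prob_{Y}^K\{\overline{Y}-\mu_{\overline{Y}}\geq\delta\}\leq e^{-s\delta}\,\Exp\!\left[e^{s(\overline{Y}-\mu_{\overline{Y}})}\right].
\end{equation*}
Because the $y^{(i)}$ are i.i.d.\ and $\overline{Y}-\mu_{\overline{Y}}=\frac{1}{K}\sum_{i=1}^K(y^{(i)}-\Exp[y^{(i)}])$, the moment generating function factors as a product of $K$ identical one-sample MGFs evaluated at $s/K$.

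Next I would invoke Hoeffding's lemma: for any zero-mean random variable $X$ taking values in an interval of length $\ell$ (here $\ell=1$, since $y^{(i)}-\Exp[y^{(i)}]\in[-\mu_{\overline{Y}},\,1-\mu_{\overline{Y}}]$), one has $\Exp[e^{tX}]\leq e^{t^2\ell^2/8}$. Applying this with $t=s/K$ and $\ell=1$ to each factor gives
\begin{equation*}
\Exp\!\left[e^{s(\overline{Y}-\mu_{\overline{Y}})}\right]\leq \left(e^{s^2/(8K^2)}\right)^{K}=e^{s^2/(8K)},
\end{equation*}
and hence $\Prob_{Y}^K\{\overline{Y}-\mu_{\overline{Y}}\geq\delta\}\leq e^{-s\delta+s^2/(8K)}$ for every $s>0$. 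The final step is to optimize the exponent over $s$: minimizing $-s\delta+s^2/(8K)$ gives $s=4K\delta$, which when substituted back produces the exponent $-2K\delta^2$ and yields the claimed bound.

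The only genuinely non-trivial ingredient is Hoeffding's lemma itself, which I would treat as an off-the-shelf fact; if a self-contained proof were needed, the standard route is to bound the convex function $e^{tx}$ on $[a,b]$ by its chord, take expectations to dominate the MGF by $e^{t\Exp[X]}\!\cdot\!(\text{affine expression in }p=\frac{-a}{b-a})$, and then show via a second-derivative bound on the resulting log-MGF $\psi(t)=-tp+\ln(1-p+pe^{t(b-a)})$ that $\psi(t)\leq t^2(b-a)^2/8$. Everything else (Markov, independence factorization, quadratic optimization in $s$) is routine, so I expect no real obstacle beyond carefully tracking the range $[-\mu_{\overline{Y}},1-\mu_{\overline{Y}}]$ of the centered samples to confirm that the interval length is indeed $1$ regardless of the value of $\mu_{\overline{Y}}\in[0,1]$.
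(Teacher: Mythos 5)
Your proposal is correct: the Chernoff bounding step, the factorization of the moment generating function over the i.i.d.\ samples, Hoeffding's lemma with interval length $1$ for the centered $y^{(i)}-\Exp[y^{(i)}]\in[-\mu_{\overline{Y}},1-\mu_{\overline{Y}}]$, and the optimization $s=4K\delta$ do yield the exponent $-2K\delta^2$. The paper itself offers no proof of this lemma --- it simply cites Theorem 1 of Hoeffding's 1963 paper --- and your argument is precisely the classical derivation behind that cited result, so there is no substantive difference in approach.
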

	
	\begin{theorem}\label{thm:K_lb}
	{\color{black}	Given a violation parameter $\delta\in[0,1]$,} risk of failure $\beta\in[0,1]$, initial state $x_0\in \mcS$, and the optimal solution $U^\ast_K\in \mathcal{U}^N$ to Problem~\ref{prb:MILP_SamplingBased}, we have the risk of failure $\Prob_{Z}^{x_0, U^\ast_K}\{ p^\ast(x_0) - p_{K}^{\ast}(x_0) \geq \delta\}\leq \beta$, if 
		\begin{align}
		K\geq \frac{-\ln(\beta)}{2\delta^2}.\label{eq:K_lb}
		\end{align}
	\end{theorem}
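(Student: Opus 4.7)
The plan is to exploit the fact that any optimizer of Problem~\ref{prb:Original} is a feasible control for the sampling-based MILP of Problem~\ref{prb:MILP_SamplingBased}, and then to apply Hoeffding's inequality to the empirical mean induced by that \emph{fixed} control. Let $U^\dagger\in\U^N$ attain the optimal value $p^\ast(x_0)$ in Problem~\ref{prb:Original}, so that $p^\ast(x_0)=\Exp_z^{x_0,U^\dagger}[z]$; here $x_0\in\mcS$ lets us drop the $1_\mcS(x_0)$ factor (cf.\ Remark~\ref{rem:zeroProb}). For each scenario $i\in\N_{[1,K]}$ introduce the Bernoulli variable $z^{(i)}(U^\dagger)=1_\mcR(G_x x_0+G_u U^\dagger+G_w W^{(i)})\in\{0,1\}$. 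Because the $W^{(i)}$ are i.i.d., so are the $z^{(i)}(U^\dagger)$, each with mean $p^\ast(x_0)$.

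Next, I would verify that the pair $(U^\dagger,\{z^{(i)}(U^\dagger)\}_{i=1}^K)$ is a feasible point of Problem~\ref{prb:MILP_SamplingBased} sample-by-sample: when $z^{(i)}(U^\dagger)=1$ the constraint $FX^{(i)}\leq h$ holds by definition of $1_\mcR$, and when $z^{(i)}(U^\dagger)=0$ the big-$M$ slack in $FX^{(i)}\leq h+M(1-z^{(i)})\mathbf{1}$ makes the constraint vacuous. Hence, letting $\overline{Y}=\frac{1}{K}\sum_{i=1}^{K}z^{(i)}(U^\dagger)$, the MILP optimum obeys the deterministic (pathwise) inequality $p_K^\ast(x_0)\geq\overline{Y}$ for every realization of $(W^{(1)},\ldots,W^{(K)})$, which rearranges to $p^\ast(x_0)-p_K^\ast(x_0)\leq p^\ast(x_0)-\overline{Y}=\mu_{\overline{Y}}-\overline{Y}$.

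Finally, I would invoke Lemma~\ref{lem:Hoeffding} on the i.i.d.\ sample $\{z^{(i)}(U^\dagger)\}_{i=1}^{K}\subset[0,1]$. Although Lemma~\ref{lem:Hoeffding} is stated as a one-sided upper-tail bound on $\overline{Y}-\mu_{\overline{Y}}$, applying it to the i.i.d.\ transformed variables $\{1-z^{(i)}(U^\dagger)\}\subset[0,1]$ immediately yields the lower-tail bound $\Prob\{\mu_{\overline{Y}}-\overline{Y}\geq\delta\}\leq e^{-2K\delta^2}$. Chaining with the previous deterministic inequality gives $\Prob\{p^\ast(x_0)-p_K^\ast(x_0)\geq\delta\}\leq e^{-2K\delta^2}$, and demanding the right-hand side be at most $\beta$ inverts to $K\geq -\ln(\beta)/(2\delta^2)$. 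The one subtlety to flag is that the theorem writes the probability as $\Prob_Z^{x_0,U_K^\ast}$ even though $U_K^\ast$ itself depends on the samples; I would resolve this by declaring the ambient probability space to be the product space generated by $(W^{(1)},\ldots,W^{(K)})$, on which $U_K^\ast$, $p_K^\ast$, and $\overline{Y}$ are all measurable. The key reason a single invocation of Hoeffding suffices is that the MILP-feasibility argument replaces the random $U_K^\ast$ with the deterministic $U^\dagger$, so no uniform bound over $\U^N$ or union bound over scenarios is required.
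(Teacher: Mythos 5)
Your argument is internally sound, and it does prove the inequality exactly as displayed in Theorem~\ref{thm:K_lb}: since the (deterministic) optimizer $U^\dagger$ of Problem~\ref{prb:Original}, together with $z^{(i)}=1_{\mcR}(X^{(i)})$, is feasible for Problem~\ref{prb:MILP_SamplingBased}, you get the pathwise bound $p_K^\ast(x_0)\geq \overline{Y}$ with $\overline{Y}=\frac{1}{K}\sum_{i=1}^{K}z^{(i)}(U^\dagger)$, and a lower-tail Hoeffding bound on the i.i.d.\ variables $z^{(i)}(U^\dagger)$ (whose common mean is $p^\ast(x_0)$ for $x_0\in\mcS$) then gives $\Prob\{p^\ast(x_0)-p_K^\ast(x_0)\geq\delta\}\leq e^{-2K\delta^2}$. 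The feasibility step, the reduction to a fixed controller, and the passage to the lower tail via $1-z^{(i)}$ are all correct.

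However, this is not the inequality the paper's own proof establishes, and the mismatch matters. The proof in the paper, Question~\ref{ques:Bound}, and the discussion immediately after the theorem all bound the \emph{reverse} event $\{p_K^\ast(x_0)-p^\ast(x_0)\geq\delta\}$, i.e., the probability that the sampled MILP \emph{overestimates} the true terminal-time probability; this is the direction needed for the underapproximation guarantee $\Prob\{p^\ast(x_0)\geq p_K^\ast(x_0)-\delta\}\geq 1-\beta$ used downstream (the event displayed in the theorem statement appears to have the difference reversed relative to its own proof). Your route cannot deliver that direction: the feasibility trick replaces the random $U_K^\ast$ by the fixed $U^\dagger$ only in the direction $p_K^\ast(x_0)\geq\overline{Y}$, so it controls underestimation, not overestimation, and neither bound implies the other. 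The paper instead writes $p_K^\ast(x_0)$ as the empirical mean under $U_K^\ast$, uses $\Exp_z^{x_0,U^\ast}[z]\geq\Exp_z^{x_0,U_K^\ast}[z]$ to pass to the mean under $U_K^\ast$, and applies the upper-tail bound of Lemma~\ref{lem:Hoeffding} --- at the price of treating the sample-dependent $U_K^\ast$ as if it were a fixed control when invoking Hoeffding, a subtlety your argument avoids only because you are proving the opposite tail. So: a correct and genuinely different proof of the statement as literally written, but it does not recover the overestimation bound that the paper proves and needs; for that you would have to follow the paper's comparison of expectations under $U^\ast$ and $U_K^\ast$ (or supply a uniform argument over $\U^N$), not the feasibility argument.
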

	\begin{proof}
		Let the optimal solution to Problem~\ref{prb:Original} be $U^\ast\in \mathcal{U}^N$ (which may not be equal to $U^\ast_K$). For $x_0\in \mcS$,
		\begin{subequations}
			\begin{align}
			p^\ast(x_0)&= \Exp_z^{x_0,U^\ast}\left[  z \right]=\frac{\Exp_Z^{x_0,U^\ast}\left[  \mathbf{1}^\top Z \right]}{K}, \label{eq:milp_p_opt_defn_past}\\
			p^\ast_K(x_0)&=\frac{1}{K}\sum_{i=1}^K z^{(i)}=\frac{\mathbf{1}^\top Z}{K}\mbox{ under } U^\ast_K, \label{eq:milp_p_opt_defn_pastk}\\
			\Exp_z^{x_0,U^\ast}\left[  z \right] &\geq \Exp_z^{x_0,U^\ast_K}\left[  z \right]\label{eq:milp_p_opt_defn_exp}
			\end{align}\label{eq:milp_p_opt_defn}%
		\end{subequations}%
		Using \eqref{eq:milp_p_opt_defn_past} and \eqref{eq:milp_p_opt_defn_pastk},
		\begin{align}
		\Prob_{Z}^{x_0, U^\ast_K}\{ p_{K}^{\ast}(x_0)-p^\ast(x_0) \geq \delta\} &=\Prob_{Z}^{x_0, U^\ast_K}\left\{\left( \frac{1}{K}\sum_{i=1}^K z^{(i)} - \Exp_z^{x_0,U^\ast}\left[  z \right] \right) \geq \delta\right\}.\label{eq:prob_ineq}
		\end{align}
		Adding and subtracting $\Exp_z^{x_0,U^\ast_K}\left[  z \right]$, we have
		\begin{align}
		\left( \frac{1}{K}\sum_{i=1}^K z^{(i)} - \Exp_z^{x_0,U^\ast}\left[  z \right] \right) &\ = \left( \frac{1}{K}\sum_{i=1}^K z^{(i)} - \Exp_z^{x_0,U^\ast_K}\left[  z \right] \right) +\left( \Exp_z^{x_0,U^\ast_K}\left[  z \right] - \Exp_z^{x_0,U^\ast}\left[  z \right] \right)\nonumber \\
		&\ \leq \left( \frac{1}{K}\sum_{i=1}^K z^{(i)} - \Exp_z^{x_0,U^\ast_K}\left[  z \right] \right)\label{eq:ineq_exp} 
		\end{align}
		where \eqref{eq:ineq_exp} follows from \eqref{eq:milp_p_opt_defn_exp}.
		Thus, $\left\{ \overline{Z}\in\{0,1\}^K: \left( \frac{\mathbf{1}^\top
        \overline{Z}}{K} - \Exp_z^{x_0,U^\ast}\left[  z \right]\right)\geq
    \delta\right\}$ is  a subset of\\$\left\{\overline{Z}\in\{0,1\}^K: \left( \frac{\mathbf{1}^\top \overline{Z}}{K}  - \Exp_z^{x_0,U^\ast_K}\left[  z \right]\right)\geq \delta\right\} $ by \eqref{eq:milp_p_opt_defn_pastk} and \eqref{eq:ineq_exp}.
		Using the fact that $\Prob\{ \mathcal{S}_1\}\leq \Prob\{ \mathcal{S}_2\}$ for any two sets $ \mathcal{S}_1\subseteq \mathcal{S}_2$, Hoeffding's inequality \eqref{eq:Hoeff_OneSide}, $\Exp_z^{x_0,U^\ast_K}\left[  z \right]=\frac{\Exp_Z^{x_0,U^\ast_K}\left[  \mathbf{1}^\top Z \right]}{K}$, and \eqref{eq:prob_ineq}, we have
		\begin{align}
		&\Prob_{Z}^{x_0, U^\ast_K}\{ p_{K}^{\ast}(x_0)-p^\ast(x_0) \geq \delta\} \leq\Prob_{Z}^{x_0, U^\ast_K}\left\{\left( \frac{\mathbf{1}^\top Z}{K} - \frac{\Exp_Z^{x_0,U^\ast_K}\left[  \mathbf{1}^\top Z \right]}{K} \right) \geq \delta\right\}\leq e^{-2K\delta^2}. \nonumber
		\end{align}
		To obtain the desired probabilistic guarantee $\Prob_{Z}^{x_0, U^\ast_K}\{ p_{K}^{\ast}(x_0)-p^\ast(x_0) \geq \delta\} \leq\beta$, we require $e^{-2K\delta^2}\leq \beta$.
		Solving for $K$, we obtain $K\geq \frac{-\ln(\beta)}{2\delta^2}$.
	\end{proof}
	
	Theorem~\ref{thm:K_lb} addresses Question~\ref{ques:Bound}.
	Specifically,   choosing at least $K$  scenarios, Theorem~\ref{thm:K_lb} guarantees that the probability of the event that the MILP-based estimated terminal  time probability (the optimal solution to Problem~\ref{prb:MILP_SamplingBased}) exceeds the true terminal time probability (the optimal solution to Problem~\ref{prb:Original}) by more than $\delta$ is less than $\beta$ (a small value). Here, both $\delta$ and $\beta$ are provided by the user. Note that although $p^{\ast}$ and $p_{K}^{\ast}$ are functions of the time horizon $N$, $K$ is independent of the choice of $N$.
    A similar bound is used for the application of aircraft conflict detection in~\cite{prandini2000probabilistic}.
	
	
	\section{Partition-based sample reduction}  
	\label{sec:Methodology}
	
	As implied from the concentration probability bounds given in Theorem~\ref{thm:K_lb}, Problem~\ref{prb:MILP_SamplingBased} typically needs a large number of samples to provide a precise approximation for Problem~\ref{prb:Original} with a small deviation $\delta$ and small risk of failure $\beta$. Therefore, solving Problem~\ref{prb:MILP_SamplingBased} can be computationally expensive or even intractable for real-time applications. In this section, we address Question~\ref{ques:Voronoi} by proposing a \emph{partition-based} method which provides an underapproximation to Problem~\ref{prb:MILP_SamplingBased} with flexible computational complexity, as opposed to  the sampling-based approach, presented in Problem~\ref{prb:MILP_SamplingBased}. To this end, we propose the following MILP problem with $\hat{K}$ binary variables, where $\hat{K}$ can be significantly smaller than $K$ and is selected by the user. 
	
	\begin{problem} \label{prb:MILP_truncated} The partition-based terminal time problem is
 \vspace{-3mm}   
		\begin{maxi*}|s|
			{\substack{U\in\U^N 
					}}{\frac{1}{K}\sum_{j=1}^{\hat{K}}\alpha^{(j)}\hat{z}^{(j)}}{}{}
			\addConstraint{\hat{X}^{(j)}}{= G_{x}x_{0} + G_{u} U + \psi^{(j)},}{\ j\in \N_{[1,\hat{K}]}}
			\addConstraint{F\hat{X}^{(j)}}{\leq h -\varepsilon^{(j)}+ M(1-\hat{z}^{(j)})\mathbf{1},}{\ j\in \N_{[1,\hat{K}]}}
			\addConstraint{\hat{z}^{(j)}}{\in\{0,1\},}{\ j\in \N_{[1,\hat{K}]}}	\end{maxi*}
\hspace{-2mm} with the optimal value denoted by $p_{\hat{K}}^{\ast}(x_0)$. 
        Here, $M \in \mathbb{R}$ is some large positive number, and $\psi^{(j)}$ for $j\in\N_{[1,\hat{K}]}$ are $\hat{K}$ selected representatives  (seeds) of uncertainty, computed in a prediction mapping $\phi:\W^{N}\rightarrow \X^{N}$ with $$\phi(W):=G_{w}W.$$ $\alpha^{(j)}$ is the importance rate of the $j^{th}$ seed with $\sum_{j=1}^{\hat{K}}\alpha^{(j)}=K$ and $\alpha^{(j)}\in\N_{[1,K]}$. For $j\in\N_{[1,\hat{K}]}$, $\varepsilon^{(j)}$ is an appropriately designed buffer that guarantees the solution of  Problem~\ref{prb:MILP_truncated} is a lower bound on the solution of Problem~\ref{prb:MILP_SamplingBased}.
	\end{problem}
	
	In Problem~\ref{prb:MILP_truncated}, the state uncertainty is characterized by $\hat{K}$ seeds where the $j^{th}$ seed represents $\alpha^{(j)}$ scenarios of $\W_{K}$. Then the reach-avoid constraints are only checked at the selected seeds instead of being checked at every scenario, which reduces the number of binary variables and constraints. 
	
	\subsection{Seed Selection and Buffer Computation}
	
    Given a sample set $\W_K$, $x_{0}\in\mcS$, and $U$, we define $\X_{K}^{x_0,U}:=X(x_0,U,\W_K)$ as the set of sampled state trajectories.
    We desire that the elements $\X_{K}^{x_0,U}$ remain in reach-avoid set $\mcR$. 
	The set $\X_{K}^{x_0,U}$ can be partitioned into cells, where each cell consists of some of the random state trajectories and is represented by a seed. Figure~\ref{fig:voronoi_partinion} shows a $2$D partition with 11 seeds. 
    
	\begin{figure}
		\centering
		\includegraphics[width=3in]{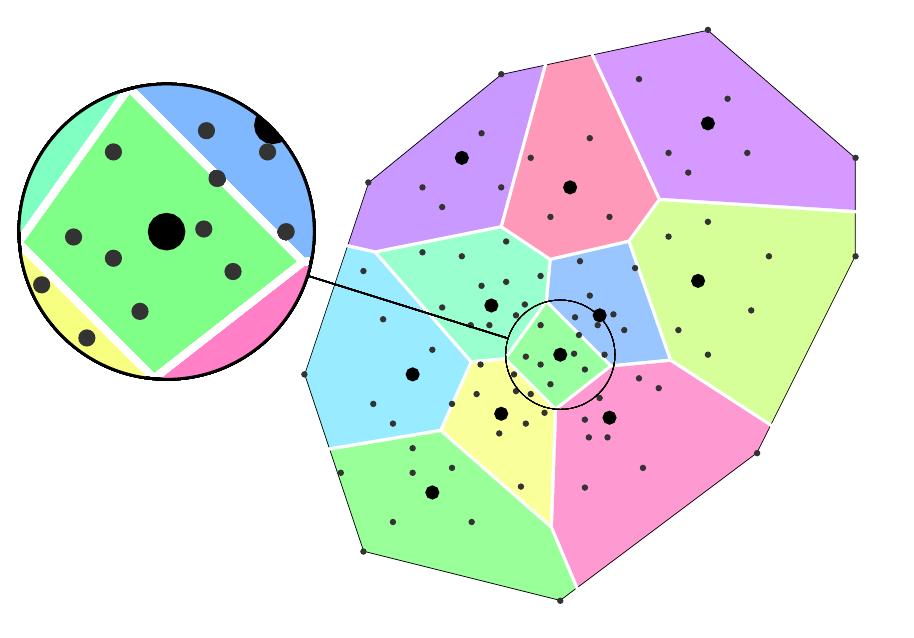}
		\caption{Partitioning the state uncertainty region of Figure~\ref{fig:voronoi_0} to $\hat{K}$ cells using a Voronoi partition. Larger dots indicate the selected Voronoi seeds. Samples  inside each cell are closer to their own seed than  other seeds.}
		\label{fig:voronoi_partinion}
	\end{figure}
    
	\begin{lemma} \label{lem:Voronoi}
		Let $\Psi_{\hat{K}}:=\{\psi^{(1)},\cdots,\psi^{(\hat{K})}\}$ be the set of optimal seeds of $\Phi_{K}:=\phi(\W_{K})$ with $\phi(W):=G_{w}W$ that minimizes $\mathrm{WSS}$. Then $\hat{\X}_{\hat{K}}^{x_0,U}=\hat{X}(\Psi_{\hat{K}})=\{\hat{X}(\psi^{(1)}),\cdots,\hat{X}(\psi^{(\hat{K})})\}$ with 
		\[
		\hat{X}(\psi^{(j)})=G_{x}x_{0}+G_{u}U+\psi^{(j)},
		\]
		represents the set of optimal seeds for sampled state trajectory set $\X_{K}^{x_0,U}$. 
	\end{lemma}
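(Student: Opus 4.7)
The plan is to recognize that the map $X \mapsto G_x x_0 + G_u U + \cdot$ acting on the disturbance-prediction set $\Phi_K$ is nothing but a rigid translation by the fixed vector $a = G_x x_0 + G_u U \in \R^{n_x N}$, and then invoke Lemma~\ref{lem:translation} directly. Specifically, from \eqref{eq:sys_stacked} we have for each scenario
\begin{equation*}
X^{(i)} = G_x x_0 + G_u U + G_w W^{(i)} = a + \phi(W^{(i)}),
\end{equation*}
so that $\X_{K}^{x_0,U} = a + \Phi_K$ in the sense of elementwise translation of the point set $\Phi_K$.

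The key step is to apply part~1 of Lemma~\ref{lem:translation} with $\mcP = \Phi_K$, $\mcC^\ast = \Psi_{\hat{K}}$, and translation vector $a$. Since $\Psi_{\hat{K}}$ is assumed to be an optimizer of $\mathrm{WSS}(\hat{K}, \mcV_{\Phi_K}(\mcC))$ over $\mcC \in \X^{\hat{K}}$, Lemma~\ref{lem:translation}(1) yields that the set $\{a + \psi^{(j)}\}_{j=1}^{\hat{K}}$ is an optimizer of $\mathrm{WSS}(\hat{K}, \mcV_{\mcP_a}(\mcC))$ over $\mcC \in \X^{\hat{K}}$, where $\mcP_a = a + \Phi_K = \X_{K}^{x_0,U}$. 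But $a + \psi^{(j)} = G_x x_0 + G_u U + \psi^{(j)} = \hat{X}(\psi^{(j)})$ by the definition in the lemma statement, so the translated seed set is exactly $\hat{\X}_{\hat{K}}^{x_0,U}$, which is therefore optimal for $\X_{K}^{x_0,U}$.

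There is essentially no obstacle beyond correctly identifying the translation structure; the lemma is a direct corollary of Lemma~\ref{lem:translation}, and the only content is pointing out that the prediction map $\phi$ and the full state map differ only by the deterministic offset $a$ (which does not depend on $W$). I would keep the proof to two or three lines, concluding with a box after the application of Lemma~\ref{lem:translation}(1).
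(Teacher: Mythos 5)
Your proposal is correct and follows exactly the paper's own route: the paper likewise observes that $G_x x_0 + G_u U$ is a deterministic translation (no uncertainty in $G_x$, $G_u$) and invokes Lemma~\ref{lem:translation} directly. Your version merely spells out the translation identification $\X_{K}^{x_0,U} = a + \Phi_K$ more explicitly, which is fine.
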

	\begin{proof}
		Results directly from Lemma~\ref{lem:translation}. Since there is no uncertainty in $G_x$ and $G_u$, $G_{x}x_0+G_{u}U$ in \eqref{eq:sys_stacked} can be interpreted as a translation term. 
	\end{proof}
	
	According to Lemma~\ref{lem:Voronoi}, although the state trajectory is an optimization variable, it can be clustered through the prediction mapping $\phi(W)$ \emph{offline},  independent of the choice of $x_0$ and $U$. 
	
	\begin{lemma}  \label{lem:buffer}
		Let a set of points $\Phi_{K}$, a set of selected seeds $\Psi_{\hat{K}}$, as defined in Lemma~\ref{lem:Voronoi}, and their Voronoi partition $\mcV_{\Phi_{K}}(\Psi_{\hat{K}})$ with  cells $V_{\Phi_K}^{(1)}(\Psi_{\hat{K}}),\cdots,V_{\Phi_K}^{(\hat{K})}(\Psi_{\hat{K}})$ be given. Then, for $j\in\N_{[1,\hat{K}]}$, every point $\phi\in V_{\Phi_K}^{(j)}(\Psi_{\hat{K}})$ remains in the original constraint set $FX\leq h$ if  $\psi^{(j)}$, the $j^{th}$ seed, remains in the buffered constraint set $F\hat{X}(\psi^{(j)})\leq h-\varepsilon^{(j)}$ with   
		\begin{equation} \label{eq:varepsilon}
		\varepsilon^{(j)}=\left[{\epsilon_{1}^{(j)}},\cdots,{\epsilon_{L}^{(j)}}\right]^{\top},
		\end{equation}
		and
		\begin{equation} \label{eq:epsilon}
		\epsilon_{\ell}^{(j)}:=\max_{\phi\in V_{\Phi_K}^{(j)}(\Psi_{\hat{K}})} \left(F_{\ell}\phi-F_{\ell}\psi^{(j)}\right), \hspace{5mm} \ell\in\N_{[1,L]}.
		\end{equation} 
\end{lemma}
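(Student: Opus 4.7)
The plan is to prove this claim by a direct row-by-row argument on the constraint $FX \leq h$, exploiting the fact that, once $x_0$ and $U$ are fixed, the mapping from a prediction $\phi \in \phi(\mathcal{W}^N)$ to the corresponding stacked state trajectory is just the affine translation $\phi \mapsto G_x x_0 + G_u U + \phi$. In particular, I would note that the full trajectory associated with $\phi \in V_{\Phi_K}^{(j)}(\Psi_{\hat{K}})$ can be written as $X = G_x x_0 + G_u U + \phi$, so that
\begin{equation*}
X \;-\; \hat{X}(\psi^{(j)}) \;=\; \phi - \psi^{(j)}.
\end{equation*}
This identity is the only consequence of the system dynamics that the proof will use.

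Next, I would pick an arbitrary constraint row $\ell \in \N_{[1,L]}$ and write $F_\ell X = F_\ell \hat{X}(\psi^{(j)}) + F_\ell(\phi - \psi^{(j)})$. By the definition of $\epsilon_\ell^{(j)}$ in \eqref{eq:epsilon} as the maximum of $F_\ell \phi - F_\ell \psi^{(j)}$ over all $\phi \in V_{\Phi_K}^{(j)}(\Psi_{\hat{K}})$, every point in the cell satisfies $F_\ell(\phi - \psi^{(j)}) \leq \epsilon_\ell^{(j)}$. Combining this with the hypothesis $F_\ell \hat{X}(\psi^{(j)}) \leq h_\ell - \epsilon_\ell^{(j)}$ yields $F_\ell X \leq h_\ell$, and since $\ell$ was arbitrary this gives $FX \leq h$ coordinatewise, which is the desired conclusion.

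I do not expect a genuine obstacle here: the buffer $\varepsilon^{(j)}$ has been defined precisely so that this worst-case row-wise slack absorbs the deviation of any cell member from its seed, and the calculation is a one-line application of the triangle-type splitting above. The only point worth emphasizing in the write-up is that the buffer is selected \emph{per row} of $F$ rather than as a uniform norm bound; this is what keeps the tightening as non-conservative as possible while still yielding the pointwise inequality $FX \leq h$ for every $\phi$ in the cell. The lemma will then serve as the basis for arguing, in the MILP of Problem~\ref{prb:MILP_truncated}, that $\hat{z}^{(j)} = 1$ forces every scenario represented by seed $\psi^{(j)}$ to lie in $\mathcal{R}$, so that the partition-based objective under-approximates the scenario objective of Problem~\ref{prb:MILP_SamplingBased}.
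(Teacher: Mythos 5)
Your proposal is correct and follows essentially the same route as the paper's proof: a row-wise argument using the fact that, with $x_0$ and $U$ fixed, $X - \hat{X}(\psi^{(j)}) = \phi - \psi^{(j)}$, so the per-row maximum $\epsilon_\ell^{(j)}$ absorbs the deviation and the buffered inequality $F_\ell \hat{X}(\psi^{(j)}) \leq h_\ell - \epsilon_\ell^{(j)}$ implies $F_\ell X \leq h_\ell$ for every $\phi$ in the cell. The paper phrases the same step by adding $F_\ell(G_x x_0 + G_u U)$ to both sides of $F_\ell\phi \leq F_\ell\psi^{(j)} + \epsilon_\ell^{(j)}$, which is an equivalent presentation.
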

	\begin{proof}
From~\eqref{eq:epsilon} we conclude that for all $\ell\in\N_{[1,L]}$, $j\in\N_{[1,\hat{K}]}$, 
\begin{equation} \label{eq:L4_P1}
F_{\ell}\phi\leq F_{\ell}\psi^{(j)}+\epsilon_{\ell}^{(j)} \hspace{10mm} \forall \phi\in V_{\Phi_K}^{(j)}(\Psi_{\hat{K}}).
\end{equation}
By adding $F_{\ell}\left(G_{x}x_0+G_{u}U\right)$ to the right and left sides of \eqref{eq:L4_P1}, we have
\begin{equation} \label{eq:L4_P2}
F_{\ell}\left(G_{x}x_0+G_{u}U+\phi\right)\leq F_{\ell}\left(G_{x}x_0+G_{u}U+\psi^{(j)}\right)+\epsilon_{\ell}^{(j)}. 
\end{equation}
Consequently, for all $\phi\in V_{\Phi_K}^{(j)}(\Psi_{\hat{K}})$, the following holds for any initial state and input trajectory, 
\begin{equation} \label{eq:L4_P3}
F_{\ell}X(x_0,U,\phi)\leq F_{\ell}\hat{X}(x_{0},U,\psi^{(j)})+\epsilon_{\ell}^{(j)}.
\end{equation}
Denoting $F_{\ell}X(x_0,U,\phi)$ and $F_{\ell}\hat{X}(x_0,U,\psi^{(j)})$ with $F_{\ell}X(\phi)$ and $F_{\ell}\hat{X}(\psi^{(j)})$, when $F_{\ell}\hat{X}(\psi^{(j)})\leq h_{\ell}-\epsilon_{\ell}^{(j)}$, it is concluded from \eqref{eq:L4_P3} that $F_{\ell}X(\phi)\leq h_{\ell}$, $\forall \phi\in V_{\Phi_K}^{(j)}(\Psi_{\hat{K}})$.  
Since \eqref{eq:L4_P3} is valid for all $\ell\in\N_{[1,L]}$ and $j\in\N_{[1,\hat{K}]}$, the proof is completed. 
\end{proof}

The buffering concept is illustrated in Figure~\ref{fig:voronoi_buffer}.
  	
	\begin{figure}
		\centering
		\includegraphics[width=2.5in]{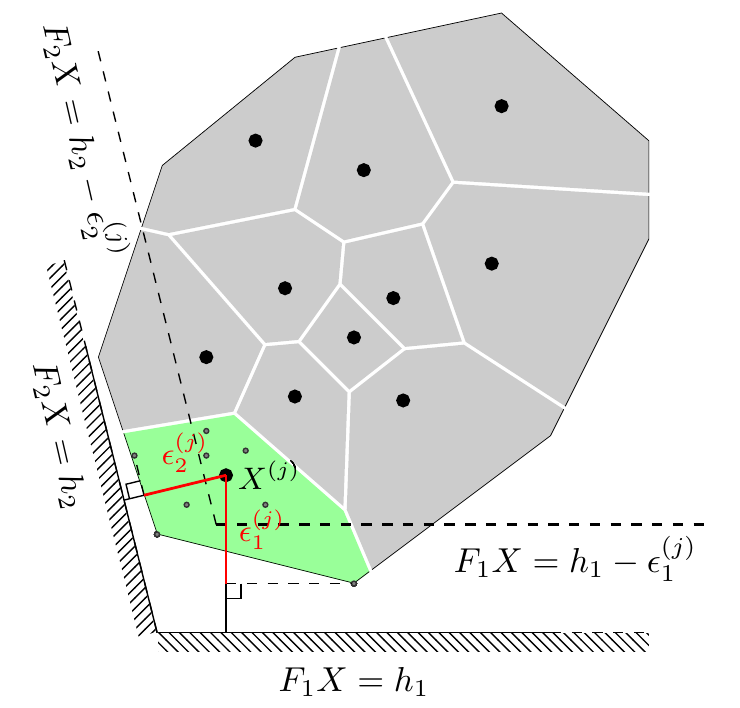}
		\vspace{-3mm}
        \caption{Buffering process. Cell $V^{(j)}$ is shown in green. If $X(\psi^{(j)})$, the state trajectory corresponding to the seed of $V^{(j)}$, remains in the buffered constraint $F_{\ell}X^{(j)}\leq h_{\ell}-\epsilon_{\ell}^{(j)}$, the state trajectory of every sample in $V^{(j)}$ will satisfy the original constraint $F_{\ell}X\leq h_{\ell}$.}
		\label{fig:voronoi_buffer}
	\end{figure}
    
	\begin{remark} 
        Computing $\epsilon_{\ell}^{(j)}$ for $\ell=1,\cdots,L$ and  $j\in \N_{[1,\hat{K}]}$ is a sorting problem in $1$D  and can be executed by worst time complexity of $\mathcal{O}(\sum_{j=1}^{\hat{K}}\alpha^{(j)} \log \alpha^{(j)})$.
	\end{remark} 
	\begin{theorem} \label{thm:MILP_truncated}
		Let $\Phi_{K}$ be a set of $K$ disturbance samples mapped through the prediction mapping $\phi(W):=G_{w}W$ and let $\Psi_{\hat{K}}$ be a set of selected seeds. Let $\alpha^{(j)}=\vert V_{\Phi_{K}}^{(j)}(\Psi_{\hat{K}})\vert$, $j\in\N_{[1,\hat{K}]}$, denote the number of elements of $V_{\Phi_{K}}^{(j)}(\Psi_{\hat{K}})$ and define $\varepsilon^{(j)}$ as in  Lemma~\ref{lem:buffer}. Problem~\ref{prb:MILP_truncated} provides a lower bound for Problem~\ref{prb:MILP_SamplingBased}.
	\end{theorem}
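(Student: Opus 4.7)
The plan is to exhibit a feasibility-transfer argument: take any feasible point of Problem~\ref{prb:MILP_truncated}, lift it to a feasible point of Problem~\ref{prb:MILP_SamplingBased} with the same objective value, and conclude that the maximum of Problem~\ref{prb:MILP_SamplingBased} dominates that of Problem~\ref{prb:MILP_truncated}. In particular, I will apply this to the optimizer of Problem~\ref{prb:MILP_truncated} and obtain $p_K^\ast(x_0)\geq p_{\hat{K}}^\ast(x_0)$.

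Concretely, let $(U^\ast_{\hat{K}},\{\hat{z}^{(j)\ast}\}_{j=1}^{\hat{K}})$ attain the optimum of Problem~\ref{prb:MILP_truncated}. I would define a candidate assignment for Problem~\ref{prb:MILP_SamplingBased} by using the same input $U=U^\ast_{\hat{K}}$, and by propagating the cell decisions to the scenarios: for each $i\in\N_{[1,K]}$, let $j(i)\in\N_{[1,\hat{K}]}$ be the unique index such that $\phi(W^{(i)})\in V^{(j(i))}_{\Phi_K}(\Psi_{\hat{K}})$, and set $z^{(i)}:=\hat{z}^{(j(i))\ast}$. The state trajectories $X^{(i)}=G_x x_0+G_u U^\ast_{\hat{K}}+G_w W^{(i)}$ are automatically determined by the dynamics equality constraint, so the only thing to verify is the big-$M$ reach-avoid constraint. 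When $z^{(i)}=0$ the constraint is vacuous thanks to $M$. When $z^{(i)}=1$, we have $\hat{z}^{(j(i))\ast}=1$, so the seed trajectory satisfies $F\hat{X}(\psi^{(j(i))})\leq h-\varepsilon^{(j(i))}$; Lemma~\ref{lem:buffer} then gives $FX^{(i)}\leq h$ for every scenario in that cell, which is exactly the Problem~\ref{prb:MILP_SamplingBased} constraint with $z^{(i)}=1$. Hence the lifted assignment is feasible.

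It remains to compare objective values. By construction, each cell $V^{(j)}_{\Phi_K}(\Psi_{\hat{K}})$ contains exactly $\alpha^{(j)}$ scenarios, so
\begin{equation*}
\frac{1}{K}\sum_{i=1}^{K}z^{(i)}=\frac{1}{K}\sum_{j=1}^{\hat{K}}\sum_{i:\,j(i)=j}\hat{z}^{(j)\ast}=\frac{1}{K}\sum_{j=1}^{\hat{K}}\alpha^{(j)}\hat{z}^{(j)\ast}=p_{\hat{K}}^{\ast}(x_0).
\end{equation*}
Since the lifted point is feasible for Problem~\ref{prb:MILP_SamplingBased} with objective equal to $p_{\hat{K}}^{\ast}(x_0)$, the optimum of Problem~\ref{prb:MILP_SamplingBased} satisfies $p_K^{\ast}(x_0)\geq p_{\hat{K}}^{\ast}(x_0)$, establishing the claim.

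The only non-routine step is the constraint check, and Lemma~\ref{lem:buffer} is precisely tailored to handle it; everything else (dynamics feasibility, partition-based counting, and the $z^{(i)}=0$ big-$M$ case) follows immediately from the constructions in Lemma~\ref{lem:Voronoi} and the definitions of $\alpha^{(j)}$ and $\varepsilon^{(j)}$. Thus the main conceptual obstacle was already dispatched when the buffer $\varepsilon^{(j)}$ was designed so that satisfaction by the seed implies satisfaction by every sample in the cell; the present theorem is essentially a packaging of that fact into the MILP optimal-value inequality.
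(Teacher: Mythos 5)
Your proposal is correct and follows essentially the same route as the paper: both arguments hinge on Lemma~\ref{lem:buffer} (a seed satisfying the buffered constraint certifies every scenario in its cell) together with the fact that the weights $\alpha^{(j)}=\vert V_{\Phi_K}^{(j)}(\Psi_{\hat{K}})\vert$ make the partition-based objective count exactly the certified scenarios, so that $p^{\ast}_{\hat{K}}\leq p^{\ast}_{K}$. The only difference is presentational: you make the paper's terse worst-case counting argument explicit by lifting the optimizer of Problem~\ref{prb:MILP_truncated} to a feasible point of Problem~\ref{prb:MILP_SamplingBased} with equal objective value, which is a cleaner formalization of the same idea.
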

	
	\begin{proof}
		According to the definition of the buffers, if seed $\hat{X}(\psi^{(j)})$, $\forall j\in\N_{[1,\hat{K}]}$,  remains in the buffered constraint set, all $\alpha^{(j)}$ points of cell $V_{\Phi_{K}}^{(j)}(\Psi_{\hat{K}})$ remain in the original constraint set. Otherwise,  at most $\alpha^{(j)}$ samples belonging to cell $V_{\Phi_{K}}^{(j)}(\Psi_{\hat{K}})$ may violate the original constraints. Since in Problem~\ref{prb:MILP_truncated} the worst case is considered by weighting the $j^{th}$ seed with $\alpha^{(j)}$,  $p^{*}_{\hat{K}}$ provides a lower bound on $p^{*}_{K}$.       
	\end{proof}
	\begin{remark}
		If initial state $x_{0}\in\mcS$ is uncertain, the proposed method can be applied by defining $\phi(x_0,W):=G_{x} x_{0}+G_{w}W$.
	\end{remark}
	
	Obviously, having more cells results in a higher accuracy and when number of cells tends to the number of samples, $p^{*}_{\hat{K}}$ tends to $p^{*}_{K}$. 
    However, this improved accuracy comes at a higher computational cost. 
    We thus have to select $\hat{K}$ by trading off accuracy and computational cost.


	\subsection{Tightening the Voronoi-based terminal time probability estimate}
	\label{sec:Reconstruction}
	
Given $U_{\hat{K}}^{*}$ obtained from Problem~\ref{prb:MILP_truncated}, a tighter underapproximation on $p^{*}_{K}$ can be recalculated by simply checking the percentage of the original $K$ sampled trajectories that remain in reach-avoid set $\mcR$,  after applying $U_{\hat{K}}^{*}$ to the stochastic system.
    In contrast to solving a large MILP (as done in Problem~\ref{prb:MILP_SamplingBased}) whose computational complexity grows exponentially with $K$, this improved estimate (see \eqref{eq:phat}) is obtained by a policy evaluation that has a computational complexity of $ \mathcal{O}(K)$.
    The following theorem presents the probability underapproximation proposed in this paper. 
	\begin{theorem} \label{thm:phat}
		Let $p_{K}^{*}$ and $p_{\hat{K}}^{*}$ be the optimal values of Problem~\ref{prb:MILP_SamplingBased} and Problem~\ref{prb:MILP_truncated}, respectively, with corresponding optimal solutions $U_{K}^{*}$ and $U_{\hat{K}}^{*}$. Define $\hat{p}$ as
		\begin{equation} \label{eq:phat}
        \hat{p}=\frac{1}{K}\sum_{i\in\N_{[1,K]}} 1_{\mcR}\left(X(x_{0},U^{*}_{\hat{K}},W^{(i)})\right).
		\end{equation}
        Then 
		\begin{equation*}
		p_{\hat{K}}^{*}\leq \hat{p}\leq p_{K}^{*}.
		\end{equation*}
	\end{theorem}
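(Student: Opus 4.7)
The plan is to establish the two inequalities $p_{\hat{K}}^{*}\le\hat{p}$ and $\hat{p}\le p_{K}^{*}$ separately, each by constructing a feasible point of an associated MILP whose objective equals the middle quantity and invoking optimality.

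For the upper bound $\hat{p}\le p_{K}^{*}$, I would plug $U=U_{\hat{K}}^{*}$ into Problem~\ref{prb:MILP_SamplingBased} and show that $\hat{p}$ is achievable as an objective value at that fixed control. Concretely, for each $i\in\N_{[1,K]}$ define $X^{(i)}=G_x x_0+G_u U_{\hat{K}}^{*}+G_w W^{(i)}$ and set $\bar{z}^{(i)}=1$ if $FX^{(i)}\le h$ and $\bar{z}^{(i)}=0$ otherwise. By the big-$M$ encoding this choice is feasible in Problem~\ref{prb:MILP_SamplingBased} (and in fact takes $\bar{z}^{(i)}$ as large as the constraints allow), so the achieved objective $\frac{1}{K}\sum_{i=1}^{K}\bar{z}^{(i)}=\frac{1}{K}\sum_{i=1}^{K}1_{\mcR}(X^{(i)})=\hat{p}$ is a lower bound on the optimum $p_{K}^{*}$. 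This is the easy direction; the only thing to flag is the identity between the big-$M$ feasibility characterization of $\bar{z}^{(i)}$ and the indicator $1_{\mcR}(X^{(i)})$, which is already recorded in the discussion surrounding Figure~\ref{fig:voronoi_0}.

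For the lower bound $p_{\hat{K}}^{*}\le\hat{p}$, I would leverage Lemma~\ref{lem:buffer} and the reasoning used in Theorem~\ref{thm:MILP_truncated}. Let $\hat{z}^{(j)\ast}$ denote the optimal binary variables of Problem~\ref{prb:MILP_truncated} at $U_{\hat{K}}^{*}$, so that $p_{\hat{K}}^{*}=\frac{1}{K}\sum_{j=1}^{\hat{K}}\alpha^{(j)}\hat{z}^{(j)\ast}$. For each index $j$ with $\hat{z}^{(j)\ast}=1$, the seed satisfies the buffered constraint $F\hat{X}(\psi^{(j)})\le h-\varepsilon^{(j)}$, and by Lemma~\ref{lem:buffer} every one of the $\alpha^{(j)}=|V_{\Phi_K}^{(j)}(\Psi_{\hat{K}})|$ disturbance samples $W^{(i)}$ whose image $\phi(W^{(i)})=G_w W^{(i)}$ lies in $V_{\Phi_K}^{(j)}(\Psi_{\hat{K}})$ produces a trajectory $X(x_0,U_{\hat{K}}^{*},W^{(i)})\in\mcR$, and thus contributes $1_{\mcR}(X(x_0,U_{\hat{K}}^{*},W^{(i)}))=1$ to $\hat{p}$. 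Summing over all $j$ with $\hat{z}^{(j)\ast}=1$ yields
\begin{equation*}
K\,p_{\hat{K}}^{*}=\sum_{j:\hat{z}^{(j)\ast}=1}\alpha^{(j)}\le\sum_{i=1}^{K}1_{\mcR}(X(x_0,U_{\hat{K}}^{*},W^{(i)}))=K\hat{p},
\end{equation*}
since the cells partition $\Phi_K$ and each contributing sample is counted exactly once in $\hat{p}$.

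The main obstacle, such as it is, is to be careful that the binary variables I use in the two halves of the argument are set consistently: in the upper-bound half, $\bar{z}^{(i)}$ must be the \emph{greediest} feasible assignment at the fixed control $U_{\hat{K}}^{*}$ so that the achieved objective is exactly the sample frequency $\hat{p}$; in the lower-bound half, I exploit that the buffered feasibility of a seed is a stronger condition than original feasibility of its cell members, which is precisely what Lemma~\ref{lem:buffer} delivers. No new analytical tools are needed beyond the lemmas and remarks already established.
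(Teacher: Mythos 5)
Your proof is correct and follows essentially the same route as the paper: the upper bound comes from evaluating the suboptimal control $U_{\hat{K}}^{*}$ on the original $K$ scenarios (you just make the feasible-point/big-$M$ argument explicit), and the lower bound uses Lemma~\ref{lem:buffer} to conclude that every scenario in a cell whose seed is deemed safe contributes $1$ to $\hat{p}$, so $\sum_{j:\hat z^{(j)\ast}=1}\alpha^{(j)}\le K\hat{p}$. No gaps.
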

	
	\begin{proof}
        i) Since $p^{*}_{K}$ is the optimal terminal time probability with $K$ samples and $\hat{p}$ is the evaluation of an open-loop controller $U_{\hat{K}}^\ast$ over these $K$ samples, we conclude that $\hat{p}\leq p_{K}^{*}$. Equality holds if $U_{\hat{K}}^{*}=U_{K}^{*}$. 
		
        ii) Let $\mathcal{J}=\{j\in\N_{[1,\hat{K}]}|\hat{z}^{(j)}=1\}$, the subset of $\mcC^\ast$ which were deemed safe by Problem~\ref{prb:MILP_truncated}.
        By definition of $\alpha^{(j)}$, $\sum_{\{i\in \N_{[1,K]}: W^{(i)}\in V^{(j)}\}} 1_{\mcR}\left(X(x_{0},U^{*}_{\hat{K}},W^{(i)})\right) = \sum_{\{i\in \N_{[1,K]}: W^{(i)}\in V^{(j)}\}} 1$ for every $j\in \mathcal{J}$.
        In other words, since $\alpha^{(j)}$ is the set of original scenarios that fall in the $j^\mathrm{th}$ cell, whenever the solution of Problem~\ref{prb:MILP_truncated} deems the representative seed safe, all the scenarios within it are safe.
        Thus, we have $\hat{p}$ at least as big as $\frac{1}{K}\sum_{j\in \mathcal{J}}\alpha^{(j)}=\hat{p}_{\hat{K}}^\ast$ since there might be other cells that were not deemed safe by Problem~\ref{prb:MILP_truncated} $(z^{(j)}=0)$ but contains scenarios that might be safe $\left(1_{\mcR}\left(X(x_{0},U^{*}_{\hat{K}},W^{(i)})\right)=1\right)$. Hence, $\hat{p}\geq \hat{p}_{\hat{K}}^\ast$.
	\end{proof}

	
	\subsection{Implementation} 
	\label{sec:Implementation}
	
	\begin{algorithm}
		\begin{algorithmic}[]
			\STATE \bf{Input:}  {\textmd{LTI system~\eqref{eq:sys}, safe set $\mcS$, target set $\mcT$, initial state $x_0$.}}
			\STATE 
            \STATE \emph{Offline (independent of $x_0$):}
			\STATE {\begin{enumerate}
					\item {\textmd{Generate $\W_{K}$ by taking $K$ i.i.d. samples from $(\eta_{w})^{N}$.}}
					\item {\textmd{Construct $\Phi_{K}=\phi(\W_{K})$ with $\phi(W):=G_{w}W$.}}
\item {\textmd{Select $\hat{K}$ based on the required time complexity or from the $\mathrm{WSS}$ vs. $\hat{K}$ curve.}}
					\item {\textmd{Compute $\Psi_{\hat{K}}$, the optimal $\hat{K}$ seeds of $\Phi_{K}$, by a clustering method.}}
					\item {\textmd{Determine $\mcV_{\Phi_{K}}(\Psi_{\hat{K}})$ with cells $V_{\Phi_{K}}^{(1)}(\Psi_{\hat{K}}),\cdots,V_{\Phi_{K}}^{(\hat{K})}(\Psi_{\hat{K}})$ from \eqref{eq:V_j}.}}
					\item {\textmd{Compute importance rate vector $\alpha=\{\alpha^{(1)},\cdots,\alpha^{(\hat{K})}\}$ with $\alpha^{(j)}$ the number of elements of $V_{\Phi_{K}}^{(j)}(\Psi_{\hat{K}})$ for $j\in\N_{[1,\hat{K}]}$.}}
					\item {\textmd{Compute  $\varepsilon^{(j)}$ for $j\in\N_{[1,\hat{K}]}$ using Lemma~\ref{lem:buffer}.}}
			\end{enumerate}}
			
			\STATE
			\STATE \emph{Online (depends on $x_0$):}
			\STATE{\begin{enumerate}
					\item {\textmd{Solve Problem~\ref{prb:MILP_truncated} for $U^{*}_{\hat{K}}$.}}
					\item {\textmd{Compute $\hat{p}$ from \eqref{eq:phat}. }}
			\end{enumerate}}
			
			\STATE 
			\STATE \bf{Output:} $\hat{p}$.
		\end{algorithmic}
		\caption{Proposed Voronoi-based reach-avoid solution}
		\label{agm:Voronoi}
	\end{algorithm}
	
    Algorithm~\ref{agm:Voronoi} describes the proposed Voronoi-based method to solve the open-loop terminal time problem. Given a sample set $\W_K$ with $K$ random samples directly drawn from $(\eta_{w})^{N}$, one can construct $\phi(\W_K)$ and find its optimal $\hat{K}$ seeds. The $k$-means method can be used to find the seeds of a Voronoi partition as explained in Section~\ref{sec:VoronoiClustering}. In addition, in order to determine the number of required seeds, $\mathrm{WSS}$ can be used as a measure of variability of points in a cluster. A smaller $\mathrm{WSS}$ implies more compact clusters which reduces the size of defined buffers $\varepsilon^{(1)},\cdots,\varepsilon^{(\hat{K})}$ and the average number of samples in each cell. Note that by increasing $\hat{K}$, clusters become smaller and the precision of Problem~\ref{prb:MILP_truncated} grows. However, eventually,  the improvement precision is insignificant compared to the imposed computational complexity.  Therefore, we compute WSS as a function of $\hat{K}$, to explore this trade-off. We propose that the ``knee" of the curve provides an efficient compromise between precision and computational complexity. It is shown experimentally in the next section that the knee of $\mathrm{WSS}$ vs. $\hat{K}$ curve can be a good representative of the knee on the $\hat{p}$ vs. $\hat{K}$. As a result, $\hat{K}$ can be computed and selected in advance.  
	
    After selecting $\hat{K}$ based on the required running time for the real-time process or based on the $\mathrm{WSS}$ vs. $\hat{K}$, one can compute the Voronoi-based partition and the number of elements in each cell, and then compute the buffers from Lemma~\ref{lem:buffer}. All these steps are executed offline (independent of $ x_0$), while solving Problem~\ref{prb:MILP_truncated} and probability reconstruction using \eqref{eq:phat} is done online (dependent on $x_0$).   
	
	\section{Illustrative Example: Spacecraft Rendezvous} \label{sec:IllustrativeExample}
	
We consider the spacecraft rendezvous example discussed in \cite{lesser2013stochastic}. In this example, two spacecraft are in the same elliptical orbit. One spacecraft, referred to as the deputy, must approach and dock with another spacecraft, referred to as the chief, while remaining in a line-of-sight cone, in which accurate sensing of the other vehicle is possible.  The relative dynamics are described by the Clohessy-Wiltshire-Hill (CWH) equations as given in \cite{wiesel1989spaceflight}, 
\begin{align}
    \ddot{x} - 3 \omega x - 2 \omega \dot{y} = m_{d}^{-1}F_{x},\qquad\ddot{y} + 2 \omega \dot{x} = m_{d}^{-1}F_{y}.
  \label{eq:2d-cwh}
\end{align}
The position of the deputy is denoted by $x,y \in \mathbb{R}$ when the chief, with the mass $m_d=300$ kg, is located at the origin. For the gravitational constant $\mu$ and the orbital radius of the spacecraft $R_{0}$, $\omega = \sqrt{\mu/R_{0}^{3}}$ represents the orbital frequency. In this example, the spacecraft is in a circular orbit at an altitude of $850$ km above the earth.

We define $\zeta = [x,y,\dot{x},\dot{y}] \in \mathbb{R}^{4}$ as the system state and $u = [F_{x},F_{y}] \in \mathcal{U}\subseteq\mathbb{R}^{2}$ as the system input, then discretize the dynamics (\ref{eq:2d-cwh}) with a sampling time of $20$ s to obtain the discrete-time LTI system,
\begin{equation}
  \zeta_{t+1} = A \zeta_{t} + B u_{t} + w_{t}.
 \label{eq:lin-cwh}
\end{equation}
The additive stochastic noise, modeled by the Gaussian i.i.d. disturbance $w_{t} \in \mathbb{R}^{4}$, with $\mathbb{E}[w_{t}] = 0$, and $\mathbb{E}[w_{t}w_{t}^\top] = 10^{-4}\times\mbox{diag}(1, 1, 5 \times 10^{-4}, 5 \times 10^{-4})$, accounts for disturbances and model uncertainty.

We define the target set and the safe set as in \cite{lesser2013stochastic},
\begin{align}
  \mathcal{T} &= \left\{ \zeta \in \mathbb{R}^{4}: |\zeta_{1}| \leq 0.1, -0.1 \leq \zeta_{2} \leq 0, |\zeta_{3}| \leq 0.01, |\zeta_{4}| \leq 0.01 \right\}, \\
  \mathcal{S} &= \left\{\zeta \in \mathbb{R}^{4}: |\zeta_{1}| \leq \zeta_{2},
  -1\leq \zeta_{2}, |\zeta_{3}| \leq 0.05, |\zeta_{4}| \leq 0.05 \right\},
\end{align}
with a horizon of $N = 5$. We consider the initial position $x=y=-0.75$ km, the initial velocity $\dot{x}=\dot{y}=0$ km/s and $\mathcal{U} = [-0.1, 0.1]\times [-0.1, 0.1]$. The terminal time probability for this problem using existing approaches~\cite{VinodLCSS2017, lesser2013stochastic} is known to be $0.86$, which we assume to be the best open-loop controller-based reach-avoid probability estimate.

We set $K=2000$ as the number of original samples to estimate the terminal time probability,  with guarantees afforded by Theorem~\ref{thm:K_lb}, and run 100 random experiments in which in each experiment $\W_N$ is generated randomly. Simulations are carried out using CVX~\cite{cvx} on a $2.8$ GHz processor Intel Core i$5$ with $16$ GB RAM. Figure~\ref{fig:simulationresults}a shows the $\mathrm{WSS}$ curve (mean value and standard deviation of the results of the 100 experiments) with up to $100$ cells. Figure~\ref{fig:simulationresults}b shows the terminal time probability approximation provided by Algorithm~\ref{agm:Voronoi}. 
As proposed, the ``knee'' of Figure~\ref{fig:simulationresults}a coincides the ``knee'' of Figure~\ref{fig:simulationresults}b; improvements in the accuracy of Algorithm~\ref{agm:Voronoi} are insignificant beyond $\hat{K}=20$. In practice, 
$\hat{K}$ can be selected from one single experiment in which $WSS$ is calculated for a random disturbance set $\W_K$ for up to 100 (maximum allowable) cells. The computation of $\mathrm{WSS}$ curve shown in Figure~\ref{fig:simulationresults}a, for one experiment using $k$-means method, took only about $2.68$ s, hence is   reasonable for \textit{offline} computation to select $\hat{K}$. The reported time includes the computation time for solving $100$ $k$-means with $1$ to $100$ cells. This time, which is associated with offline step, can be further reduced by changing the step size of $\hat{K}$ variation (horizontal axis) or calculating $\mathrm{WSS}$ for arbitrary $\hat{K}$s (e.g. finer steps at the beginning and coarser steps at the end). The run time for the online component of Algorithm~\ref{agm:Voronoi} is shown in Figure~\ref{fig:simulationresults}c. Since Problem~\ref{prb:MILP_truncated} is a mixed-integer linear program,  the time complexity exponentially increases exponentially with the number of cells~\cite[Rem. 1]{bemporad_control_1999}.
   
\begin{figure}[t!]
	\vspace{0mm}
	\centering
	\begin{minipage}{3in}
		\centering
		(a)
		\includegraphics[width=3in]{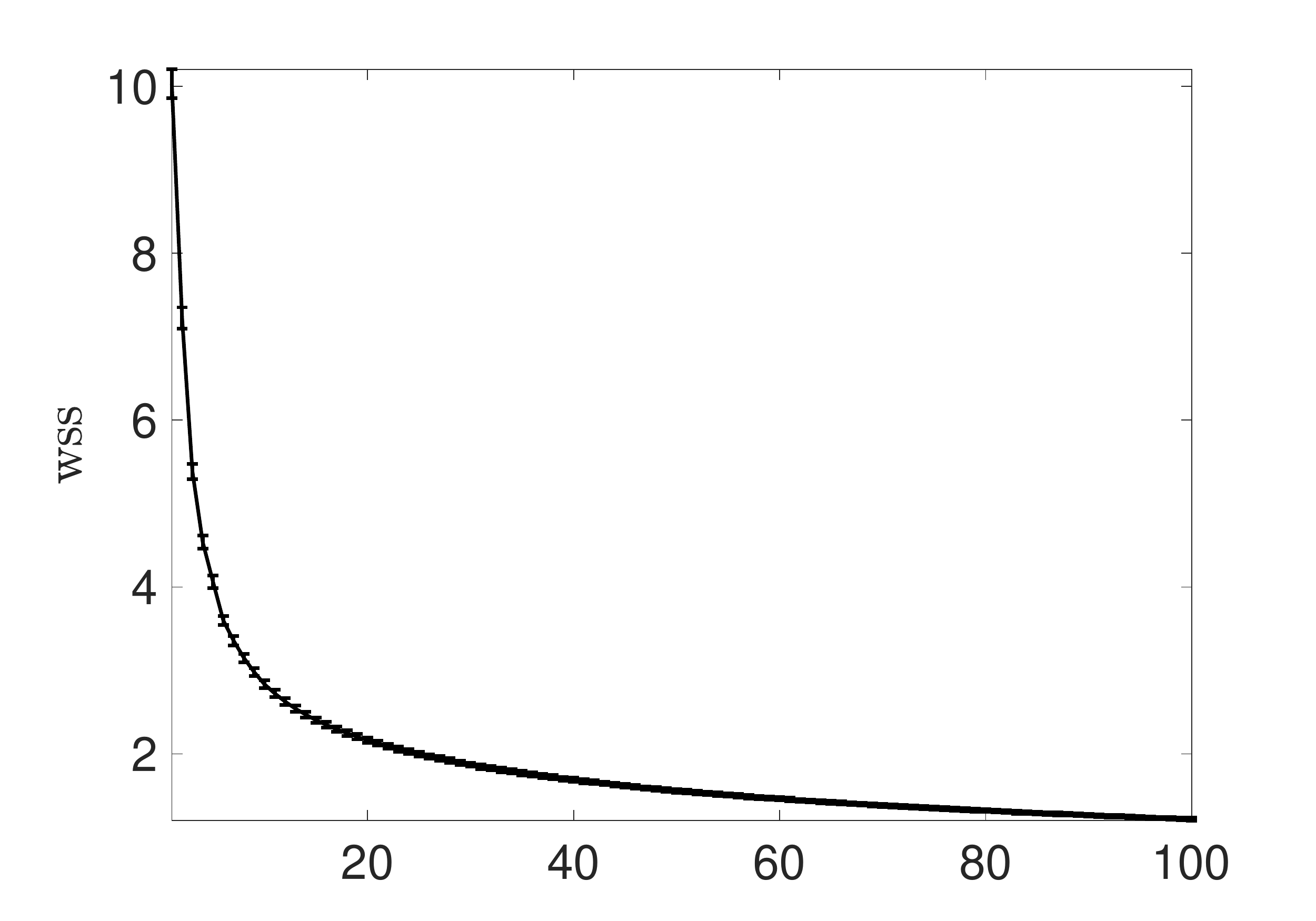}\\
	\end{minipage}
	\centering
	\begin{minipage}{3in}
		\centering 
        (b)
		\includegraphics[width=3in]{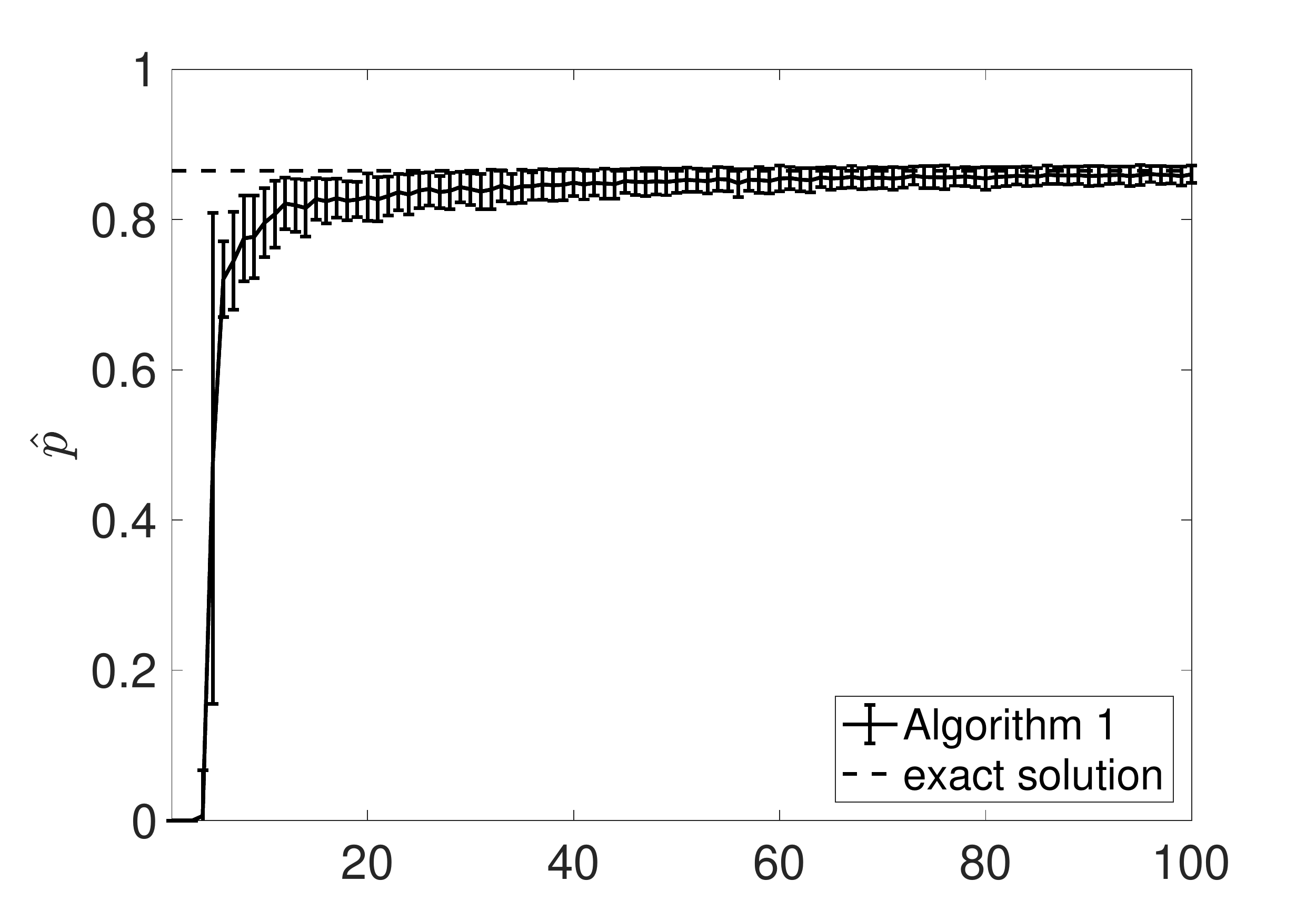}\\
	\end{minipage}
	\centering
	\begin{minipage}{3in}
		\centering
		(c)
		\includegraphics[width=3in]{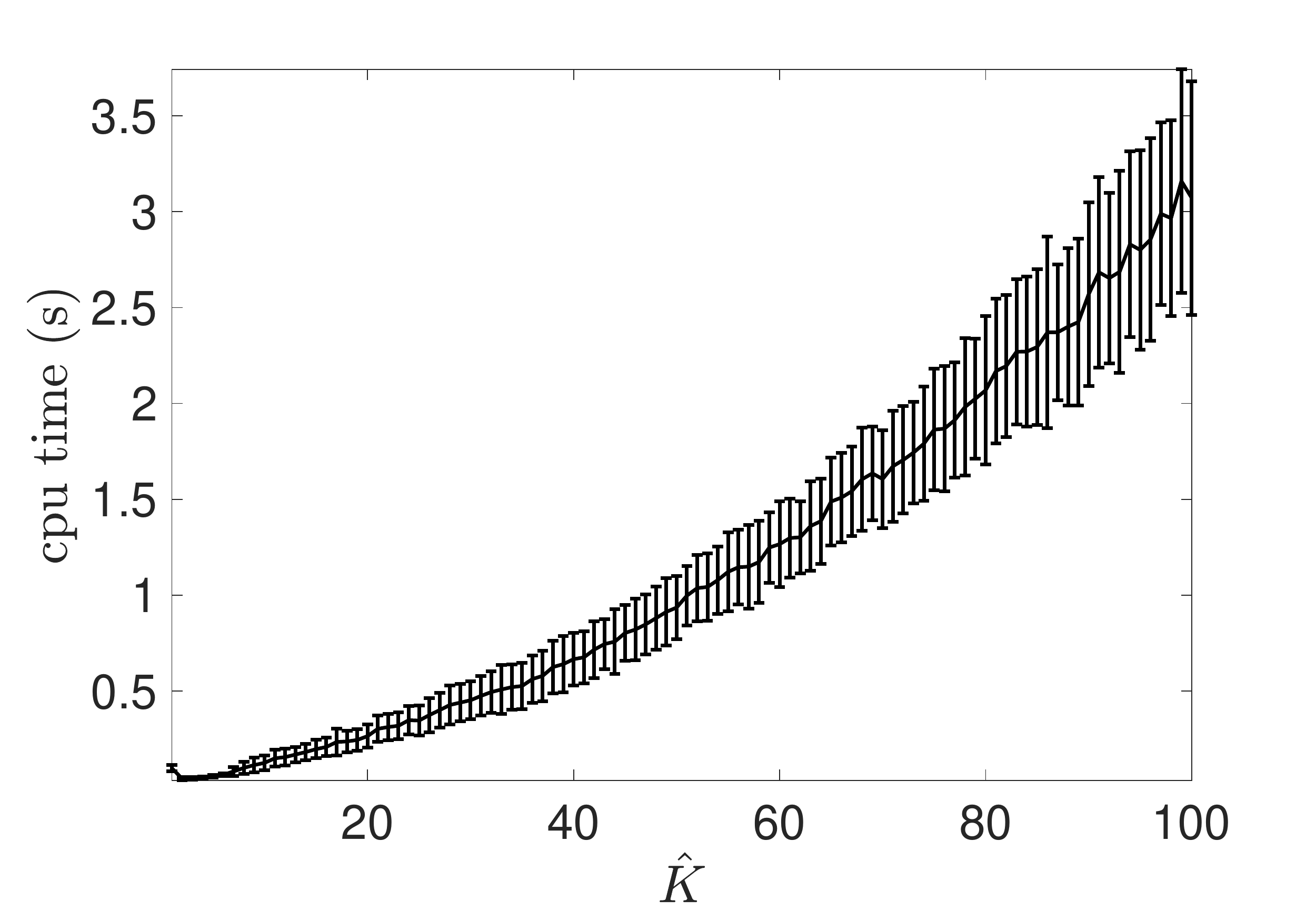}\\
	\end{minipage}
    \caption{ Mean and standard deviation of (a) within-cluster sum of squares (used to select $\hat{K}$, offline) (b) terminal time probability (c) online run time with increasing number of cells $\hat{K}$, obtained from $100$ experiments with $2000$ original scenarios.}
	\label{fig:simulationresults}
\end{figure}

We see in Figure~\ref{fig:simulationresults}b, that partitions with $20$ to $40$ cells provide a reasonable estimate of the terminal time probability, without significant loss of precision. The computed terminal time probability and the mean value of the online run time for $\hat{K}=20$, $40$ and $100$ are reported in Table~\ref{tab:compare}.
As desired, Algorithm~\ref{agm:Voronoi} provides a flexible trade-off between the accuracy and computation time by selecting a suitable partition, and can be significantly faster than the existing Fourier transform approach~\cite{VinodLCSS2017} and particle filter~\cite{lesser2013stochastic, blackmore2010probabilistic}   (which fails to deal with large $K$s due to the exponential complexity of MILP problem).

Figure~\ref{fig:trajectory_xy} shows the position trajectory, associated with $\zeta_1$ and $\zeta_2$, obtained by the Fourier method \cite{VinodLCSS2017} (blue dots) and the proposed Voronoi partition-based method (green stars) with $40$ cells. Green regions show the uncertainty regions of Voronoi method at different time instants obtained by 2000 original scenarios. 

\begin{table}
    \centering
    \begin{tabular}{|m{2.8cm}|m{2cm}|m{1.5cm}|}
    \hline									
    Method & Terminal reach-avoid probability & Online run time (s) \\ \hline	
    Algorithm~\ref{agm:Voronoi} & &\\
$K= 2000, \hat{K}=20$  &0.83 & 0.2 \\
$K=2000, \hat{K}=40$  &0.8492 & 0.6  \\  
$K=2000, \hat{K}=100$  &0.8604 & 2.7   \\ \hline
    Particle filter~\cite{lesser2013stochastic, blackmore2010probabilistic} (Problem~\ref{prb:MILP_SamplingBased}), $K=2000$& -  &-  \\ \hline
    Fourier transform~\cite{VinodLCSS2017} & 0.862 & 66 \\ \hline
   \end{tabular}
   \caption[caption here]{Terminal reach-avoid probability estimate and computation time of existing methods and Algorithm~\ref{agm:Voronoi}}
   \label{tab:compare}
\end{table}

	\begin{figure}
		\centering
		\includegraphics[width=3.0in]{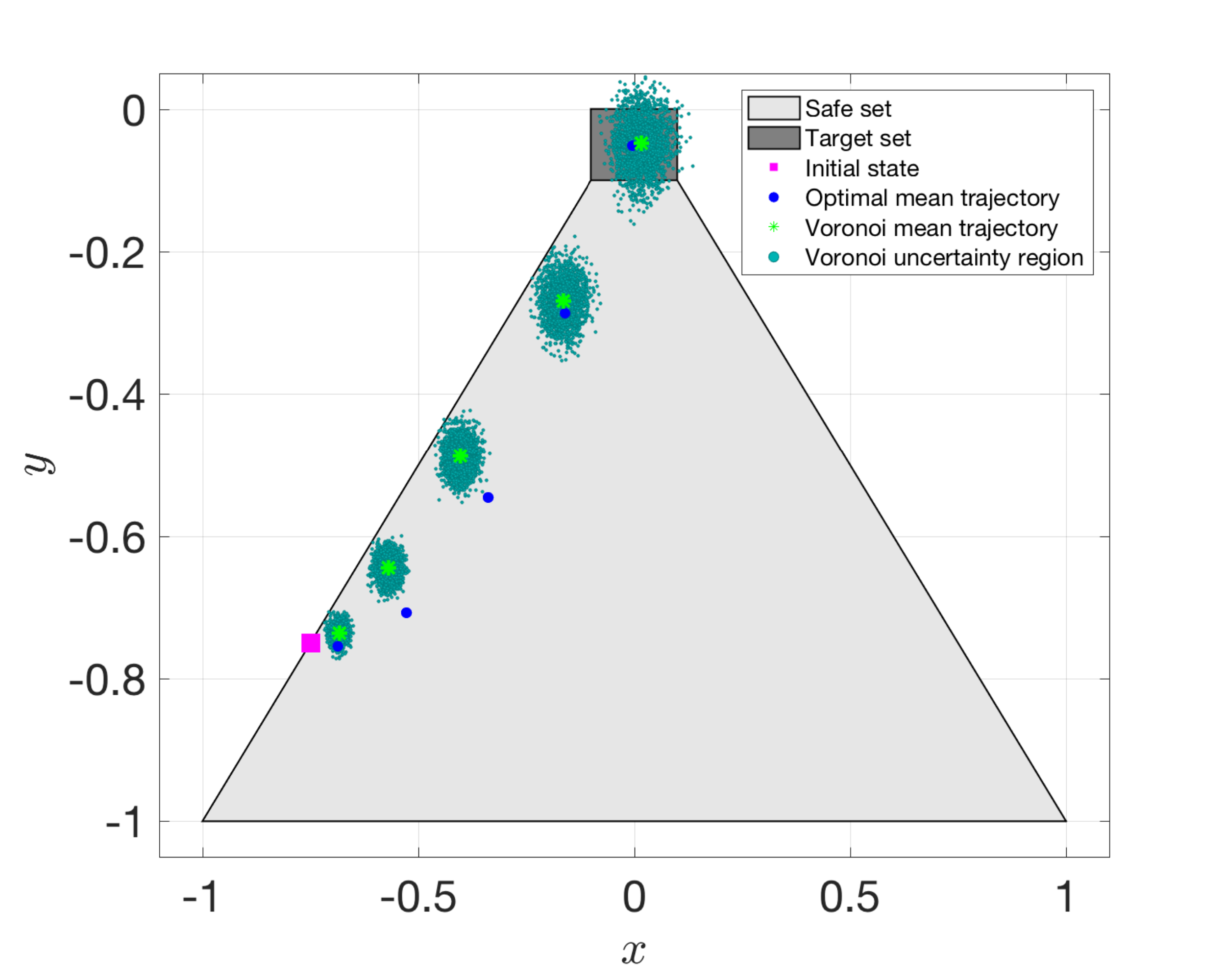}
\vspace{-2mm}
\caption{Position trajectory for Fourier algorithm given in \cite{VinodLCSS2017} and the proposed Voronoi partition-based method with 40 cells. }
		\label{fig:trajectory_xy}
	\end{figure}


	\section{Conclusion} \label{sec:Conclusion}  
	In this paper we presented a novel partition-based method for under-approximating the terminal time probability through sample reduction. By using Hoeffding's inequality, we provided a bound on the required number of scenarios to achieve a desired probabilistic bound on the approximation error. Furthermore, we proposed a method which clusters the taken scenarios in few cells, each cell represented by a seed, where the number of cells is selected by the user in a systematic manner using the trend of a given curve or based on the desired running time. The proposed method scales easily with dimension since the clustering computational complexity increases linearly with the dimension of data. In addition, the simulation results confirm that the proposed method significantly decrease the running time, and therefore, it can be easily applied to real-time systems.  
	
	\bibliographystyle{IEEEtran}
	\bibliography{IEEEabrv,shortIEEE,StochasticReachAvoid}
\end{document}